\definecolor{vegasgold}{rgb}{0.77, 0.7, 0.35}
\definecolor{darkgoldenrod}{rgb}{0.72, 0.53, 0.04}
\definecolor{gold(metallic)}{rgb}{0.83, 0.69, 0.22}
\newtheorem{lthm}{Theorem}
\DeclareFontFamily{U}{wncy}{}
\DeclareFontShape{U}{wncy}{m}{n}{<->wncyr10}{}
\DeclareSymbolFont{mcy}{U}{wncy}{m}{n}
\DeclareMathSymbol{\Sh}{\mathord}{mcy}{"58}
\newtheorem{theorem}{Theorem}[section]
\newtheorem{lemma}[theorem]{Lemma}
\newtheorem{definition}[theorem]{Definition}
\newtheorem{example}[theorem]{Example}
\newtheorem{conjecture}[theorem]{Conjecture}
\newtheorem{proposition}[theorem]{Proposition}
\newcommand{\ac}{\operatorname{ac}}
\newcommand{\cK}{\mathcal{K}}
\newcommand{\bT}{\mathbf{T}}
\newcommand{\cH}{\mathcal{H}}
\newcommand{\Z}{\mathbb{Z}}
\newcommand{\p}{\mathfrak{p}}
\newcommand{\Q}{\mathbb{Q}}
\newcommand{\F}{\mathbb{F}}
\newcommand{\cL}{\mathcal{L}}
\newcommand{\cO}{\mathcal{O}}
\newcommand{\cS}{\mathcal{S}}
\newcommand{\Sel}{\op{Sel}}
\newcommand{\cyc}{\op{cyc}}
\newcommand{\mylabel}[2]{#2\def\@currentlabel{#2}\label{#1}}
\newcommand{\op}[1]{\operatorname{#1}}
\begin{document}

\title[Mazur's growth number conjecture]{Mazur's growth number conjecture and congruences}

\author[A.~Ray]{Anwesh Ray}
\address[Ray]{Chennai Mathematical Institute, H1, SIPCOT IT Park, Kelambakkam, Siruseri, Tamil Nadu 603103, India}
\email{anwesh@cmi.ac.in}

\keywords{}
\subjclass[2020]{11R23, 11G05}

\maketitle

\begin{abstract}
 Motivated by the work of Greenberg–Vatsal \cite{greenbergvatsal00} and Emerton–Pollack–Weston \cite{emertonpollackweston}, I investigate the extent to which Mazur’s conjecture on the growth of Selmer ranks in $\mathbb{Z}_p$-extensions of an imaginary quadratic field persists under $p$-congruences between Galois representations. As a first step, I establish Mazur’s conjecture for certain triples $(E, K, p)$ under explicit hypotheses. Building on this, I prove analogous results for Greenberg Selmer groups attached to modular forms that are congruent mod $p$ to $E$, including all specializations arising from Hida families of fixed tame level. In particular, I show that the Mordell–Weil ranks in non-anticyclotomic $\mathbb{Z}_p$-extensions of $K$ remain bounded for elliptic curves $E'$ such that $E[p]$ and $E'[p]$ are isomorphic as Galois modules.
\end{abstract}

\section{Introduction}

\subsection{Background and motivation}
Let $L$ be a number field and $E_{/L}$ be an elliptic curve. The group of $L$-rational points on $E$, denoted $E(L)$, forms an abelian group under a natural geometric addition law arising from the chord-tangent construction on the curve. This group is referred to as the Mordell--Weil group of $E$ over $L$. A foundational result in the arithmetic of elliptic curves, known as the Mordell–Weil theorem, asserts that $E(L)$ is a finitely generated abelian group. Consequently, one may write $E(L) \simeq \mathbb{Z}^r \oplus T$, where $T \subseteq E(L)$ is a finite abelian subgroup consisting of torsion points, and $r:=\operatorname{rank} E(L)$. This rank is a central arithmetic invariant of the curve and has connections to the behavior of $L$-functions. Understanding how this rank varies in families of number fields is a major theme in modern number theory.

\par In a seminal development, Mazur \cite{MazurmainIwasawa} initiated the study of the Iwasawa theory of elliptic curves, which seeks to understand the growth properties of $p$-primary Selmer groups of abelian varieties as one ascends pro-$p$ towers of number fields. These Selmer groups interpolate information about the Mordell–Weil group and $p$-primary Tate–Shafarevich group.

\par Let $\Q_{\mathrm{cyc}}$ denote the unique $\Z_p$-extension of $\Q$ and $E_{/\Q}$ be an elliptic curve. Kato \cite{Kato2004} and Rohrlich \cite{Rohrlich} show that the group $E(\Q_{\mathrm{cyc}})$ has finite rank. Let $K$ now be an imaginary quadratic field, and let $\cK$ be a $\Z_p$-extension of $K$. For each integer $n \geq 0$, denote by $\cK_n \subset \cK$ the unique subextension with $[\cK_n : K] = p^n$, so that one obtains a tower of number fields:
$$
K = \cK_0 \subset \cK_1 \subset \cdots \subset \cK_n \subset \cK_{n+1} \subset \cdots \subset \cK = \bigcup_{n \geq 0} \cK_n.
$$
\noindent Among such extensions, a particularly important one is the anticyclotomic $\Z_p$-extension of $K$, denoted $\cK_{\mathrm{ac}}$, characterized by the action of complex conjugation on its Galois group being nontrivial. A distinction is made between two types of pairs $(E,K)$. The pair is said to be exceptional if the elliptic curve $E$ has complex multiplication (CM) by an order in the imaginary quadratic field $K$; otherwise, it is termed generic. The sign of the pair $(E,K)$ plays a crucial role in formulating growth predictions. In the generic case, this sign is defined as the root number (i.e., the sign of the functional equation) of the Hasse–Weil $L$-function $L(s, E_{/K})$. In the exceptional case, when $E$ has CM by $K$, there exists a Hecke character $\varphi$ such that $L(s, E/\Q) = L(s, \varphi)$, and the sign of $(E,K)$ is then defined to be the sign of the functional equation of $L(s, \varphi)$.

\par In this context, Mazur \cite[p.~201]{Mazurmodcurvesandarith} formulated a precise conjecture describing the asymptotic growth of the $p^\infty$-Selmer groups over the finite layers $\cK_n$ of a $\Z_p$-extension $\cK$ of $K$.

\begin{conjecture}[Mazur's Growth Number Conjecture]\label{Mazur-GNC}
Let $E/\Q$ be an elliptic curve, $K$ an imaginary quadratic field, and $p$ a prime at which $E$ has good ordinary reduction. Suppose $\cK/K$ is a $\Z_p$-extension in which all primes of bad reduction for $E$ remain finitely decomposed. Then for all sufficiently large $n$, the $\Z_p$-corank of the $p^\infty$-Selmer group satisfies

$$
\operatorname{corank}_{\Z_p} \Sel_{p^\infty}(E/\cK_n) = c p^n + O(1),
$$

where the constant $c$ depends on the nature of the extension and the sign of $(E,K)$, and is given explicitly by:

\[c =
\begin{cases}
    0 & \text{if } \cK \ne \cK_{\mathrm{ac}} \text{ or } (E,K) \text{ has sign } +1, \\
    1 & \text{if } \cK = \cK_{\mathrm{ac}},\ (E,K) \text{ is generic, and has sign } -1, \\
    2 & \text{if } \cK = \cK_{\mathrm{ac}},\ (E,K) \text{ is exceptional, and has sign } -1.
\end{cases}\]

\end{conjecture}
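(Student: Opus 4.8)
The plan is to recast the growth of $\corank_{\Z_p}\Sel_{p^\infty}(E/\cK_n)$ as the computation of a single Iwasawa invariant, and then to evaluate that invariant case by case. I would write $\Gamma = \op{Gal}(\cK/K) \cong \Z_p$, set $\Lambda = \Z_p\lb \Gamma \rb$ for the associated Iwasawa algebra, and let $X := \Sel_{p^\infty}(E/\cK)^\vee$ denote the Pontryagin dual of the Selmer group over the full $\Z_p$-extension. Since $E$ has good ordinary reduction at $p$ and the bad primes are finitely decomposed in $\cK/K$, a Mazur-style control theorem should show that the restriction maps $\Sel_{p^\infty}(E/\cK_n) \to \Sel_{p^\infty}(E/\cK)^{\Gamma_n}$ have finite kernel and cokernel of order bounded independently of $n$, where $\Gamma_n = \op{Gal}(\cK/\cK_n)$. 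Combined with the structure theorem for finitely generated $\Lambda$-modules, which furnishes a pseudo-isomorphism $X \sim \Lambda^{\rank_\Lambda X} \oplus (\text{torsion})$, this yields
$$\corank_{\Z_p}\Sel_{p^\infty}(E/\cK_n) = (\rank_\Lambda X)\, p^n + O(1),$$
since the free part contributes $p^n$ to each layer of coinvariants while the torsion part contributes only a quantity bounded in $n$. Thus the conjectural constant is exactly $c = \rank_\Lambda X$, and the whole problem reduces to computing this generic $\Lambda$-rank.

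Next I would express $\rank_\Lambda X$ through Greenberg's global Euler--Poincar\'e characteristic formula for the $\Lambda$-adic Selmer group of the ordinary representation $V = T_p E \otimes \Q_p$. Over the imaginary quadratic base $K$ the archimedean contribution is governed by the single complex place, and the local conditions at the primes above $p$ are dictated by the ordinary filtration $0 \to F^+ V \to V \to V/F^+ V \to 0$. The decisive feature is the action of complex conjugation $\sigma$ on $\Gamma$: it is trivial on the cyclotomic line but acts by $-1$ on the anticyclotomic line $\cK_{\mathrm{ac}}$, so the generic corank is controlled by the $\sigma = -1$ eigenspace of $V$. Carrying out the count, one expects to find that $X$ is $\Lambda$-torsion (so $c=0$) whenever $\cK \ne \cK_{\mathrm{ac}}$, while along $\cK_{\mathrm{ac}}$ the root-number constraint forces the parity of $\rank_\Lambda X$ to match the sign of $(E,K)$; in the sign $-1$ case this should pin the generic rank to $1$ in the generic (non-CM) regime and to $2$ in the exceptional regime, where $V|_{G_K}$ splits as $\psi \oplus \psi^{c}$ and both characters contribute.

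The arithmetic content --- and the main obstacle --- lies in upgrading these generic and parity computations to exact equalities. For the $c=0$ cases I must know that $X$ is genuinely $\Lambda$-torsion: along the cyclotomic line this is the Kato--Rohrlich finiteness descended to $K$ by twisting (so that $L(E/K) = L(E/\Q)L(E^K/\Q)$), and for a general non-anticyclotomic $\Z_p$-extension it should follow by a two-variable specialization from the torsion-ness of the two-variable Selmer module. For the anticyclotomic sign $-1$ cases, the lower bound $\rank_\Lambda X \ge 1$ (resp.\ $\ge 2$) would come from exhibiting a nontorsion $\Lambda$-submodule: the Heegner-point Euler system of Cornut--Vatsal in the generic case, and Rubin's elliptic-unit/CM construction in the exceptional case. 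The genuinely hard direction is the matching upper bound $\rank_\Lambda X \le 1$ (resp.\ $\le 2$), which requires one divisibility in the relevant anticyclotomic (resp.\ two-variable) Iwasawa main conjecture; it is precisely to secure this divisibility that the explicit hypotheses on the triple $(E,K,p)$ must be imposed. Once both inequalities are in hand, $c = \rank_\Lambda X$ takes the asserted values and the control-theorem reduction of the first step delivers the conjecture.
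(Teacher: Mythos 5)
Your proposal cannot be judged as a complete proof because the statement is a conjecture that remains open: the paper itself never proves Conjecture \ref{Mazur-GNC} unconditionally, but only for triples $(E,K,p)$ satisfying the explicit hypotheses \eqref{HH}, \eqref{IMC}, \eqref{tEC} and $E(K)[p]=0$ (Proposition \ref{main long propn}, i.e.\ Theorem \ref{thmA}). Your outline is honest about deferring the same deep inputs (main-conjecture divisibilities, nontriviality of Heegner classes), so the fair comparison is between your conditional strategy and the paper's conditional proof. Your first step --- control theorem plus the structure theorem, reducing everything to $c = \rank_\Lambda X$ --- is sound and is indeed the common skeleton.

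The genuine gap is in your mechanism for the non-anticyclotomic $c=0$ cases. You assert that torsionness along a general line $\cK \ne \cK_{\mathrm{ac}}$ ``should follow by a two-variable specialization from the torsion-ness of the two-variable Selmer module.'' This implication is false as stated: a torsion $\Lambda_\infty = \Z_p\llbracket X,Y\rrbracket$-module can have characteristic ideal divisible by $f_{a,b} = (1+X)^a(1+Y)^b - 1$ for some $(a,b)$, and along that very line the specialized Selmer group has positive $\Lambda_{a,b}$-corank; two-variable torsionness only rules this out for all but finitely many lines, never for each given one. This is precisely the difficulty the paper's argument is built to circumvent: under \eqref{IMC} and \eqref{tEC}, Theorem \ref{PR and S} pins down the cyclotomic characteristic ideal as $I_{1,0}=(T)$; the leading-term formula for the two-variable $p$-adic $L$-function then forces the $\Lambda$-adic height $\langle z_\infty, z_\infty\rangle_0$ of the Heegner class to be a $p$-adic \emph{unit}; and since the leading coefficient of every specialization $\cL_{a,b}$ equals this same quantity up to $\Z_p^\times$, one gets $\mu_{a,b}=0$ and $\lambda_{a,b}=1$ uniformly for all $(a,b)\neq(0,1)$ --- no line can lie in the support. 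The anticyclotomic case is then handled by Theorem \ref{kundu lei theorem} (Kundu--Lei), whose key hypothesis $\langle z_\infty, z_\infty\rangle_0 \neq 0$ is exactly what the unit computation supplies, rather than by the parity-plus-Euler-system matching you sketch (your parity step, incidentally, is itself a deep theorem of Nekov\'a\v{r} type, not a ``count''). So the missing idea in your proposal is the use of the Perrin--Riou height pairing and the derivative of the two-variable $p$-adic $L$-function as the single invariant controlling \emph{all} lines at once.
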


\subsection{Prior results}
\par I present below a chronological and non-exhaustive list of results related to Conjecture \ref{Mazur-GNC}.

\begin{enumerate}
\item[(i)] The exceptional case of Conjecture \eqref{Mazur-GNC} has been extensively studied and is now well understood. A summary of the main results can be found in \cite[Section 1, Theorems 1.7 and 1.8]{GreenbergITEC}.
\item[(ii)] Recently, there has been progress on Conjecture \ref{Mazur-GNC} for elliptic curves $E$ of rank at most one. Gajek--Leonard, Hatley, Kundu and Lei \cite{GHKL} show that, under a certain semisimplicity hypothesis \eqref{hyp:SC} on the structure of the Selmer group over the \( \Z_p^2 \)-extension of \( K \), the conjecture holds.

\item[(iii)] The connection between Mazur’s growth number conjecture and Hilbert’s tenth problem was investigated by Katharina Müller and the author in \cite{muller2024hilbert}. There, we established the constancy of Mordell–Weil ranks for elliptic curves in many $\Z_p$-extensions of imaginary quadratic fields. These results, obtained through techniques from arithmetic statistics, led to the construction of infinite families of number fields over which Hilbert’s tenth problem has a negative solution. Precise statements can be found in Theorems A, B, 4.13, and 4.14 of \emph{loc. cit.} Subsequently, a solution to Hilbert’s tenth problem for number rings was announced by Koymans and Pagano \cite{koymans2024hilbert}. An independent proof of the same result was later made public by Alpöge, Bhargava, Ho, and Schnidman in \cite{alpoge2025rank}.
\item[(iv)] Using a formula for due to Disegni \cite{disegni1, Disegni2}, Kundu and Lei \cite{kundulei2025} showed that for elliptic curves $E$ of rank $1$ satisfying the generalized Heegner hypothesis \eqref{GHH}, the Conjecture \ref{Mazur-GNC} does follow from one divisibility of the Iwasawa main conjecture of $E$ over the $\Z_p^2$-extension of $K$ and a hypothesis $\langle z_{\op{Heeg}}, z_{\op{Heeg}}\rangle\neq 0$ arising from Perrin--Riou's anticyclotomic $\Lambda$-adic pairing. I refer to Theorem \ref{kundu lei theorem} for the precise statement of their result. Examples verifying $\langle z_{\op{Heeg}}, z_{\op{Heeg}}\rangle\neq 0$ have not been computed in \emph{loc. cit.}, however in principle, can be computed via the approach outlined in \cite{heightsofheegnerpoints}. 

\end{enumerate}

\subsection{Main results}
\par Let $E$ and $E'$ be elliptic curves over $\Q$ with good ordinary reduction at a prime $p$. One says that $E$ and $E'$ are \emph{$p$-congruent} if their mod $p$ Galois representations $E[p]$ and $E'[p]$ are isomorphic as $\operatorname{Gal}(\overline{\Q}/\Q)$-modules. In this case, Greenberg and Vatsal \cite{greenbergvatsal00} demonstrated that the Iwasawa invariants associated to $E$ are closely related to those of $E'$. Analogous phenomena have been investigated by Emerton, Pollack, and Weston \cite{emertonpollackweston} in the context of $p$-ordinary Hida families of modular forms. The aim of this paper is to investigate the propagation of Mazur’s growth number conjecture under congruences between Galois representations. Let $E/\Q$ be an elliptic curve and let $f$ be a Hecke eigenform. Suppose $p$ is a prime such that the residual Galois representation associated to $f$ is isomorphic to the Galois representation on $E[p]$. Assuming that Conjecture \ref{Mazur-GNC} holds for the triple $(E, K, p)$, I ask: what conclusions, if any, can be drawn about the corresponding triple $(f, K, p)$, particularly concerning the growth of $\Z_p$-coranks of Selmer groups in various $\Z_p$-extensions of $K$?

\par In the special case where $f$ has weight $2$, trivial nebentypus, and rational Fourier coefficients, it corresponds to the isogeny class of an elliptic curve $E'/\Q$. I consider the situation in which the residual representation is absolutely irreducible and the associated elliptic curve $E'$ is fixed. In this setting, an isomorphism $E[p] \simeq E'[p]$ of $\operatorname{Gal}(\overline{\Q}/\Q)$-modules has significant consequences for Conjecture \ref{Mazur-GNC} as it pertains to the triple $(E', K, p)$. In order for my congruence results to apply, it becomes necessary that the $\mu$-invariant of the Selmer groups vanish in $\Z_p$-extensions of number fields. The $p$-adic regulator of $E_{/K}$ is denoted by $\operatorname{Reg}_p(E/K)$. It is conjectured that the $p$-adic regulator is non-zero (see \cite{Schneider85}). Let $\mathcal{R}_p(E/K)$ represent the normalized regulator, defined as $\mathcal{R}_p(E/K) := \operatorname{Reg}_p(E/K)/p$. Given a prime $v$ of $K$, denote by $c_v(E/K)$ the Tamagawa number of $E$ over $K_v$. Set $\kappa_v$ to denote the residue field at $v$. Suppose $E$ has good reduction at $v$, set $\widetilde{E}(\kappa_v)$ to be the group of $\kappa_v$-rational points of the reduction of a regular model of $E$ at the prime $v$. \begin{lthm}[Proposition \ref{main long propn}]\label{thmA}
    Let $E$ be an elliptic curve over $\Q$ with good ordinary reduction at a prime $p\geq 5$ and $K$ be an imaginary quadratic number field. Let $N$ be the conductor of $E$. Assume that the following conditions are satisfied for $(E,K, p)$:
    \begin{enumerate}
        \item $E(K)[p]=0$,
        \item all rational primes dividing $Np$ are split in $K$, 
        \item $\op{corank}_{\Z_p} \op{Sel}_{p^\infty} (E/K)=1$,
        \item $p\nmid \mathcal{R}_p(E/K)$,
        \item $\Sh(E/K)[p^\infty]=0$,
        \item $p\nmid c_v(E/K)$ for all primes $v\nmid p$ of $K$,
        \item and $\widetilde{E}(\kappa_v)[p^\infty]=0$ for all primes $v|p$ of $K$.
        \item the $2$-variable Iwasawa main conjecture holds (see \eqref{IMC}),
    \end{enumerate}
 Let $\cK$ be a $\Z_p$-extension of $K$ in which the primes of $K$ that lie above $p$ are ramified. Then, the following assertions hold.
    \begin{enumerate}
        \item[(a)] Assume that $\cK$ is not the anticyclotomic extension $K_{\rm{ac}}$. The Selmer group $\op{Sel}_{p^\infty}(E/\cK)$ is cotorsion over the Iwasawa algebra with $\mu$-invariant equal to $0$ and $\lambda$-invariant equal to $1$. 
        \item[(b)] For $\cK\neq K_{\rm{ac}}$, the corank of $\op{Sel}_{p^\infty}(E/\cK_n)$ is $1$ for all $n\geq 1$.
        \item[(c)] Conjecture \ref{Mazur-GNC} holds for $(E,K,p)$.
    \end{enumerate}
\end{lthm}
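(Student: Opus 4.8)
The plan is to derive every one-variable invariant appearing in the statement as a specialization of a single two-variable object, and to extract its behavior from the main conjecture assumed in (8). Since $p$ splits in $K$ by (2), write $p=\p\bar{\p}$, let $\widetilde{K}/K$ be the compositum of all $\Zp$-extensions of $K$, so that $\op{Gal}(\widetilde{K}/K)\cong\Zp^2$, and set $\widetilde{\Lambda}:=\Zp\lb\op{Gal}(\widetilde{K}/K)\rb$ and $X:=\Sel_{p^\infty}(E/\widetilde{K})^\vee$. First I would invoke the two-variable main conjecture (hypothesis (8)) to write $\Char_{\widetilde{\Lambda}}(X)=(\cL_p)$, where $\cL_p$ is the two-variable $p$-adic $L$-function. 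The key structural claim I would then establish is a factorization $\cL_p=\ell_{\mathrm{ac}}\cdot g$, where $\ell_{\mathrm{ac}}\in\widetilde{\Lambda}$ is a linear form cutting out the anticyclotomic line and $g$ has unit constant term. Three inputs go into this: condition (2) forces the generalized Heegner hypothesis, hence root number $w(E/K)=-1$, and the resulting functional equation makes $\cL_p$ vanish identically along the anticyclotomic line, so that $\ell_{\mathrm{ac}}\mid\cL_p$; conditions (1) and (4)--(7) are exactly the non-degeneracy hypotheses in the $p$-adic Birch--Swinnerton-Dyer leading-term formula, so that with $E(K)[p]=0$, $p\nmid\mathcal{R}_p(E/K)$, $\Sh(E/K)[p^\infty]=0$, $p\nmid c_v(E/K)$ and $\widetilde{E}(\kappa_v)[p^\infty]=0$, every factor of the leading coefficient of $g$ is a $p$-adic unit; and (3), together with the non-degeneracy of the height pairing supplied by (4), guarantees that $\ell_{\mathrm{ac}}$ divides $\cL_p$ exactly once, so the order of vanishing at the trivial character is $1$. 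In particular the two-variable $\mu$-invariant is $0$.

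For (a) and (b), fix a $\Zp$-extension $\cK\neq K_{\mathrm{ac}}$ in which the primes above $p$ ramify, and let $\pi_{\cK}\colon\widetilde{\Lambda}\to\Lambda:=\Zp\lb\op{Gal}(\cK/K)\rb$ be the projection. I would prove a specialization identity $\Char_{\Lambda}(\Sel_{p^\infty}(E/\cK)^\vee)=\bigl(\pi_{\cK}(\cL_p)\bigr)$, which is legitimate once one verifies that $\pi_{\cK}(\cL_p)\neq0$ and that no pseudo-null submodule of $X$ obstructs descent; ramification of $p$ in $\cK$ supplies the good local control needed. Because $\cK$ is transverse to the anticyclotomic line, $\pi_{\cK}(\ell_{\mathrm{ac}})$ is a uniformizer of $\Lambda$ while $\pi_{\cK}(g)$ is a unit, so $\pi_{\cK}(\cL_p)$ is nonzero with $\mu$-invariant $0$ and $\lambda$-invariant $1$; this is assertion (a). Passing through Mazur's control theorem \cite{MazurmainIwasawa}, whose kernels and cokernels are finite and bounded by (1) and (7), yields $\corank_{\Zp}\Sel_{p^\infty}(E/\cK_n)=1$ for every $n\geq1$, which is (b); in particular the growth constant is $c=0$, as Conjecture \ref{Mazur-GNC} predicts.

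It remains to handle the anticyclotomic direction in (c). Here $\pi_{K_{\mathrm{ac}}}(\ell_{\mathrm{ac}})=0$, so $\pi_{K_{\mathrm{ac}}}(\cL_p)=0$ and $\Sel_{p^\infty}(E/K_{\mathrm{ac}})^\vee$ fails to be $\Lambda$-torsion. To pin its $\Lambda$-corank to exactly $1$ I would use that the regulator condition (4) forces the $p$-adic height of the Heegner point to be nonzero, i.e. $\langle z_{\op{Heeg}},z_{\op{Heeg}}\rangle\neq0$, placing us in the rank-one anticyclotomic regime of \cite{kundulei2025}. The control theorem then gives $\corank_{\Zp}\Sel_{p^\infty}(E/(K_{\mathrm{ac}})_n)=p^n+O(1)$; since the sign of $(E,K)$ is $-1$ in the generic case, this is $c=1$, completing (c).

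The principal obstacle is the input underlying the first and third paragraphs: converting the arithmetic conditions (4)--(7) into the statement that the relevant leading coefficient is a genuine $p$-adic unit requires a clean two-variable leading-term formula, and the divisibility $\ell_{\mathrm{ac}}\mid\cL_p$ rests on the functional equation for the split two-variable $p$-adic $L$-function. The specialization step must also be controlled carefully — nonvanishing of $\pi_{\cK}(\cL_p)$, absence of spurious pseudo-null contributions, and the exact order of vanishing — in order to separate the order-one vanishing along a transverse line from the identical vanishing along the anticyclotomic line. By contrast, the control-theorem bookkeeping behind (b) is routine.
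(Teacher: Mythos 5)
Your skeleton matches the paper's: the two-variable main conjecture \eqref{IMC}, vanishing of $L_p(X,Y)$ to exact order one along the anticyclotomic line with unit transverse derivative, specialization to lines transverse to the anticyclotomic one, a control theorem (Proposition \ref{control propn}) to descend, and Kundu--Lei (Theorem \ref{kundu lei theorem}) for the anticyclotomic direction. But the heart of the argument --- that the cofactor $g$ in your factorization $\cL_p=\ell_{\mathrm{ac}}\cdot g$ has \emph{unit} constant term, equivalently that $\langle z_\infty,z_\infty\rangle_0$ is a $p$-adic unit --- is exactly what you leave unproved: you appeal to a ``clean two-variable leading-term formula'' converting (1), (4)--(7) into unit factors, and you yourself flag this as the principal obstacle. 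No such two-variable formula is invoked in the paper, and its proof is designed precisely to avoid needing one. The paper's route is: by Theorem \ref{PR and S} (Perrin--Riou/Schneider), hypotheses (3)--(7) (i.e.\ \eqref{tEC}) force the \emph{cyclotomic} Selmer group to have $\mu=0$ and $\lambda=1$, so $I_{1,0}=(T)$; by \eqref{IMC} together with Proposition \ref{control propn}, $(\cL_{1,0})=I_{1,0}=(T)$; and by the Howard--Castella--Disegni derivative formula, $\cL_{1,0}=\alpha T+\cdots$ with $\alpha\sim\langle z_\infty,z_\infty\rangle_0$. Comparing forces $\alpha\in\Z_p^\times$. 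The unit-ness is thus \emph{deduced} from the already-understood cyclotomic line and then propagated to every other line $(a,b)\neq(0,1)$, where the same expansion has leading coefficient $\sim\langle z_\infty,z_\infty\rangle_0$. Your proposal never exploits the cyclotomic specialization in this way, so the arithmetic hypotheses (3)--(7) never actually enter your argument through any proved statement; they only appear inside the unproved leading-term appeal.

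The same issue reappears in your treatment of (c): you assert that condition (4) alone forces $\langle z_{\op{Heeg}},z_{\op{Heeg}}\rangle\neq0$. That is not a formal implication: the regulator is computed on a generator of $E(K)\otimes\Q$, and relating it to the height of the Heegner class requires the index $[E(K):\Z\, z_{\op{Heeg}}]$ to be prime to $p$ (a Gross--Zagier/Kolyvagin-type input in which (3) and (5)--(7) intervene), or else the indirect argument via \eqref{IMC} described above, which is what the paper actually does. Once unit-ness of $\langle z_\infty,z_\infty\rangle_0$ is in hand, your invocation of Theorem \ref{kundu lei theorem} for (c) coincides with the paper's, and your specialization and control-theorem steps for (a) and (b) are sound. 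So: right architecture, but the two places where hypotheses (3)--(7) must do real work are gaps --- one openly acknowledged, one hidden in an unjustified ``forces.''
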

\noindent I note that (b) follows easily from (a), due to inequalities 
\[1=\op{corank}_{\Z_p}\op{Sel}_{p^\infty}(E/K)\leq \op{corank}_{\Z_p}\op{Sel}_{p^\infty}(E/\cK_n)\leq \op{corank}_{\Z_p}\op{Sel}_{p^\infty}(E/\cK)=1.\] 
Let $v$ be a prime of $K$ which lies above $p$. By class field theory, there is a unique $\Z_p$-extension of $K$ in which $v$ is unramified. Hence, there are precisely three $\Z_p$-extensions of $K$ for which the conclusions (a) and (b) of the above Theorem does not apply.
\begin{lthm}[Theorem \ref{main theorem of paper}]\label{thm B}
Let $(E, K, p)$ be as in Theorem \ref{thmA}. Let $f$ be a $p$-ordinary Hecke eigencuspform with conductor $N_f$ which is coprime to $p$. Assume that:
\begin{enumerate}
    \item all primes $\ell|N_f$ are split in $K$, 
    \item the residual representation of $f$ is absolutely irreducible and is isomorphic as a $\op{G}_{\Q}$-module to (a base change of) $E[p]$.
\end{enumerate}
 Let $\cK$ be a $\Z_p$-extension of $K$ in which the primes of $K$ that lie above $p$ are ramified. Assume that $\cK$ is not the anticyclotomic extension $K_{\rm{ac}}$. Then the $\Z_p$-corank of the Greenberg Selmer group of $f$ over $\cK$ is finite. In particular, if $f$ coincides with an elliptic curve $E'$, then the rank of $E'(\cK_n)$ is bounded as $n\rightarrow \infty$.
\end{lthm}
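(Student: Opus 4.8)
The plan is to transport the $\Lambda$-cotorsionness and the vanishing of the $\mu$-invariant furnished by Theorem \ref{thmA} across the congruence $E[p]\simeq \overline{\rho}_f$, following the philosophy of Greenberg--Vatsal \cite{greenbergvatsal00} and Emerton--Pollack--Weston \cite{emertonpollackweston}. Write $\Lambda:=\Z_p\lb\op{Gal}(\cK/K)\rb$. First I would invoke Theorem \ref{thmA}(a): as $(E,K,p)$ satisfies its hypotheses and $\cK\neq K_{\rm{ac}}$, the dual of $\Sel_{p^\infty}(E/\cK)$ is $\Lambda$-torsion with $\mu=0$. The key reformulation is that for a finitely generated $\Lambda$-module, being torsion with $\mu=0$ is equivalent to being finitely generated over $\Z_p$; hence the assertion that a Selmer group over $\cK$ has finite $\Z_p$-corank is \emph{precisely} the statement that its dual is $\Lambda$-torsion with $\mu=0$. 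By the structure theory of $\Lambda$-modules together with Mazur's control theorem, this is in turn equivalent to the finiteness of the residual Selmer group attached to the $\F_p$-representation $E[p]$ with its induced local conditions.

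Next I would pass to non-primitive Selmer groups. Let $\Sigma_0$ be the set of primes of $K$ dividing $N N_f$; since $p\nmid N N_f$, these lie away from $p$. Deleting the local conditions at $\Sigma_0$ produces non-primitive Selmer groups whose residual versions are cut out inside $H^1(G_{K,\Sigma},E[p])$ by the unramified conditions at good primes and the ordinary condition at $p$ alone; in particular they depend only on the residual $\op{G}_\Q$-module and the ordinary filtration at $p$. Discarding these finitely many conditions alters the $\lambda$-invariant but not the $\mu$-invariant, the deleted local terms being cofinitely generated over $\Z_p$ once the primes of $\Sigma_0$ are finitely decomposed in $\cK$ (as in the framework of Conjecture \ref{Mazur-GNC}). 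Thus Theorem \ref{thmA} yields finiteness of the residual non-primitive Selmer group of $E$ over $\cK$. Since $E[p]\simeq\overline{\rho}_f$ as $\op{G}_\Q$-modules and both $E$ and $f$ are $p$-ordinary, the residual ordinary lines at $p$ correspond, so the residual non-primitive Selmer groups of $E$ and $f$ over $\cK$ are isomorphic, and the latter is finite.

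From here the conclusion follows formally: finiteness of the residual non-primitive Selmer group of $f$ forces its full non-primitive Selmer group over $\cK$ to be $\Lambda$-torsion with $\mu=0$, and re-imposing the local conditions at $\Sigma_0$ passes to a quotient of the dual, which cannot increase the $\mu$-invariant; hence the primitive Greenberg Selmer group of $f$ over $\cK$ has finite $\Z_p$-corank. For the final assertion, suppose $f$ corresponds to an elliptic curve $E'$. Then $E'[p]\simeq E[p]$ gives $E'(K)[p]=0$, so Mazur's control theorem applies along the tower; cotorsionness with $\mu=0$ over $\cK$ makes the dual finitely generated over $\Z_p$, whence $\corank_{\Z_p}\Sel_{p^\infty}(E'/\cK_n)$ is bounded uniformly in $n$. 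As $\rank E'(\cK_n)\leq \corank_{\Z_p}\Sel_{p^\infty}(E'/\cK_n)$, the Mordell--Weil ranks remain bounded.

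The hard part will be the comparison of local conditions underpinning the isomorphism of residual non-primitive Selmer groups. Because $N$ and $N_f$ need not have the same prime divisors, one must confirm at each prime of $\Sigma_0$ that the deleted local cohomology contributes nothing to the $\mu$-invariant, and one must check that the ordinary line at $p$ is carried correctly under $E[p]\simeq\overline{\rho}_f$, which requires the two residual characters on the ordinary filtration to be distinct. This is exactly where the absolute irreducibility of the residual representation and the splitting hypotheses on $N N_f$ in $K$ are indispensable: the former guarantees that the residual Selmer structure is well behaved and the comparison unambiguous, while the latter control the local terms at the bad primes and ensure the decomposition behaviour needed to keep the local $\mu$-invariants zero.
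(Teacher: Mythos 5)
Your proposal is correct, and its skeleton is the same as the paper's: Theorem \ref{thmA} (Proposition \ref{main long propn}) gives that $\op{Sel}_{p^\infty}(E/\cK)$ is cotorsion with $\mu=0$ over the Iwasawa algebra; this is equivalent, via the structure theorem and a mod-$p$ comparison of Selmer groups (Lemma \ref{basic lemma on lambda and corank} and Proposition \ref{basic mu invariant p torsion lemma}), to the finiteness of a residual Selmer group that is determined by the $\op{G}_\Q$-module $E[p]\otimes_{\F_p}\kappa\simeq A_f[\varpi]$; running the equivalence backwards for $f$ then gives the finite $\Z_p$-corank of $\op{Sel}_{\op{Gr}}(A_f/\cK)$ (this is exactly Proposition \ref{congruence propn}), and the Mordell--Weil statement follows as you say. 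The one genuine difference is how the residual Selmer group is arranged to depend only on residual data: you delete the local conditions at the primes dividing $NN_f$, in the non-primitive style of Greenberg--Vatsal, which obliges you to check in addition that the deleted local terms are cofinitely generated over $\Z_p$ (so that deletion preserves cotorsionness and $\mu=0$) and then to re-impose the conditions at the end; the paper instead builds the strict condition of vanishing in $H^1(\cK_v,\cdot)$ at all $v\mid NN_f$ into the very definition of $\op{Sel}_{\op{Gr}}$, a condition already visible from the residual module, so Proposition \ref{congruence propn} identifies the two primitive residual Selmer groups directly, with no detour through non-primitive objects. Both routes rest on the same external inputs, namely finite decomposition of the bad primes in $\cK$ and the matching of the ordinary filtrations at $p$ under $E[p]\simeq A_f[\varpi]$ --- the two points you flag as ``the hard part,'' which the paper's own proof treats as immediate, so they are not gaps relative to the paper. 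The only inaccuracy in your write-up is attributing the equivalence between cotorsionness with $\mu=0$ and residual finiteness to Mazur's control theorem: what is actually used is the Kummer-sequence diagram comparing $\op{Sel}_{\op{Gr}}(A_f[\varpi]/\cK)$ with $\op{Sel}_{\op{Gr}}(A_f/\cK)[\varpi]$, which applies because $H^0(\cK,A_f[\varpi])=0$ (a consequence of $E(K)[p]=0$ and the fact that $\cK/K$ is pro-$p$) and because the relevant local comparison maps have finite kernels.
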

\noindent Let $E_{/\Q}$ be an elliptic curve with good ordinary reduction at $p$ and level $N$, and let $k$ be any positive integer with $k\equiv 2\pmod{(p-1)}$. Assume that the residual representation on $E[p]$ is absolutely irreducible. Then there is a $p$-ordinary modular form of weight $k$, level $N$ such that condition (2) of Theorem \ref{thm B} is satisfied. This modular form arises as a weight $k$ specialization of a Hida family of tame level $N$ associated to the residual representation of $E$. Note that if $E$ satisfies the Heegner hypothesis, then so does $f$. Therefore, if $E$ is an elliptic curve satisfying the conditions of Theorem \ref{thmA}, then for any $k\geq 2\pmod{p-1}$, there is a modular form $f$ of weight $k$ for which the conclusion of Theorem \ref{thm B} applies. This application is discussed further in Section \ref{hida subsection}. Congruences between modular forms of fixed weight and varying level are ubiquitous, as established by the standard level-raising results of Diamond and Taylor \cite{diamond1994non}. \par An explicit example of two $5$-congruent elliptic curves $E$ and $E'$ satisfying the conditions of Theorem \ref{thm B} is given in Example \ref{example}. The relevant conditions here are fully explicit and can be easily verified using standard computational packages such as \emph{SAGE} or \emph{MAGMA}.
\subsection*{Acknowledgement} The author completed the first draft of this article while participating in the program on \hyperref[https://icts.res.in/program/afbk]{\emph{Automorphic Forms and the Bloch–Kato Conjecture}} at the International Centre for Theoretical Sciences (ICTS), Bangalore. He is grateful to ICTS for its hospitality and for providing a stimulating and supportive research environment during his visit. The author thanks Antonio Lei and Katharina M\"uller for helpful comments.

\subsection*{Conflict of interest} The author reports that there are no conflicts of interest to declare.

\subsection*{Data Availability} There is no data associated to the results of this manuscript.

\section{Iwasawa theory of Selmer groups and congruences}

\subsection{Selmer groups over $\Z_p$-extensions}
\par This section introduces preliminary notions and fixes standard notation, chosen to be consistent with that of \cite{GHKL, muller2024hilbert, kundulei2025} whenever possible. I fix an algebraic closure $\bar{\Q}$ of $\Q$ (resp. $\bar{\Q}_\ell$ of $\Q_\ell$) and for each prime $\ell$, choose an embedding $\iota_\ell: \bar{\Q}\hookrightarrow \bar{\Q}_\ell$. Note that $\iota_\ell$ induces an inclusion of Galois groups $\iota_\ell^*: \op{G}_\ell \hookrightarrow \op{G}_\Q$ where $\op{G}_\ell:=\op{Gal}(\bar{\Q}_\ell/\Q_\ell)$ and $\op{G}_\Q:=\op{Gal}(\bar{\Q}/\Q)$. Let $E/\mathbb{Q}$ be an elliptic curve of conductor $N = N_{E}$ and $p\geq 5$ be a prime at which $E$ has good ordinary reduction. Let $E[p^n]$ (resp. $E[p^\infty]$) be the $p^n$ torsion ($p$-primary torsion) subgroup of $E(\bar{\Q})$, viewed as a module over $\op{G}_\Q$. Set $T:=\varprojlim_n E[p^n]$ to denote the $p$-adic Tate module of $E$. Let $K=\Q(\sqrt{-D})$ be an imaginary quadratic field in which $p$ splits. Write $p\mathcal{O}_K = \mathfrak{p} \overline{\mathfrak{p}}$ and $N=N^+N^-$, where $N^+$ (resp. $N^-$) is the product of prime factors which split (resp. are inert) in $K$. Throughout this section, assume the \emph{generalized Heegner hypothesis}: \begin{itemize}
\item[(\mylabel{GHH}{\textbf{GHH}})]  $N^-$ is a squarefree product of an even number of prime numbers.
\end{itemize} Some of the results stated below will require the stronger assumption that $N^- = 1$. However, this condition will not be imposed in the general discussions that follow.

\par Let $\Sigma_0=\Sigma_0(E)$ be the set of prime numbers $\ell$ which divide $Np$, and let $\Sigma$ be a finite set of rational primes containing $\Sigma_0$. Let $\Q_\Sigma\subset \bar{\Q}$ be the maximal algebraic extension of $\Q$ in which all primes $\ell\notin \Sigma$ are unramified. Given a number field $F \subset \Q_\Sigma$, I shall, for ease of notation, write
\[H^i(\Q_\Sigma/F, \cdot) := H^i(\operatorname{Gal}(\Q_\Sigma/F), \cdot) \quad \text{and} \quad H^i(\Q_\ell, \cdot) := H^i(G_\ell, \cdot).\]
For $\ell \in \Sigma$ and any algebraic extension $F/\Q$, denote by $\Omega_\ell(F)$ the set of primes $v$ of $F$ such that $v \mid \ell$. If $F$ is a number field and $v\in \Omega_\ell(F)$, then let $F_v$ be the completion of $F$ at $v$. On the other hand, if $F/\Q$ is an infinite algebraic extension, then $F_v$ is defined to be the union of completions $F'_v$ for all number fields $F'$ contained in $F$. If $F$ is a number field, set
$$
J_\ell(E/F) := \bigoplus_{v \in  \Omega_\ell(F)} H^1(F_v, E)[p^\infty],
$$
where $E[p^\infty]$ is the $p$-primary torsion subgroup of $E(\bar{\Q})$. On the other hand, if $F$ is an infinite extension, then the corresponding local condition is defined as the direct limit over finite subextensions $F' \subset F$ of $F$:
$$
J_\ell(E/F) := \varinjlim_{F'} J_\ell(E/F'),
$$
where the transition maps in the direct system are the natural restriction maps.
\begin{definition}
Define the $p^\infty$-Selmer group of $E$ over any extension $L\subset \Q_\Sigma$, finite or infinite, as follows
$$
\operatorname{Sel}_{p^\infty}(E/L) := \ker \left( H^1(\Q_\Sigma/L, E[p^\infty]) \longrightarrow \prod_{\ell \in \Sigma} J_\ell(E/L) \right).
$$
\end{definition}

\par Let $K_{\operatorname{cyc}}$ denote the cyclotomic $\mathbb{Z}_p$-extension of $K$. This is the unique $\mathbb{Z}_p$-extension of $K$ contained in $K(\mu_{p^\infty})$. Denote by $K_{\operatorname{ac}}$ the anticyclotomic $\mathbb{Z}_p$-extension of $K$, i.e., the unique $\mathbb{Z}_p$-extension of $K$ that is Galois over $\mathbb{Q}$ and on which complex conjugation acts nontrivially on $\operatorname{Gal}(K_{\operatorname{ac}}/K)$. These two extensions are the only $\mathbb{Z}_p$-extensions of $K$ that are Galois over $\mathbb{Q}$. 

\begin{definition}
    For $n\geq 0$ and $\ast\in \{\rm{cyc}, \rm{ac}\}$, let $K_{\ast,n}$ is the subextension of $K_\ast$ for which $[K_{\ast,n}:K]=p^n$. Define the compact Selmer group at the $n$-th layer by \[\mathcal{S}el(T/K_{\ast, n}):=\varprojlim_m \op{Sel}_{p^m}(E/K_{\ast, n}).\] The compact Selmer group over $K_\ast$ is then defined as follows:
    \[\mathcal{S}el(T/K_\ast):=\varprojlim_n \mathcal{S}el(T/K_{\ast, n}).\]
\end{definition}
\par Let $K_\infty$ be the compositum $K_{\operatorname{cyc}} K_{\operatorname{ac}}$, and set $\operatorname{G}_\infty := \operatorname{Gal}(K_\infty/K)$. Note that $K_\infty$ is the unique $\mathbb{Z}_p^2$-extension of $K$. Given a subgroup $H$ of $\op{G}_\infty$, let $\overline{H}$ be the closure of $H$ in $\op{G}_\infty$. Fix topological generators $\sigma$ and $\tau$ of $G_\infty$ such that
$$
\begin{aligned}
\overline{\langle \sigma \rangle} &= \ker\left( G_\infty \to \op{Gal}(K_{\op{cyc}} / K) \right), \\
\overline{\langle \tau \rangle} &= \ker\left( G_\infty \to \op{Gal}(K_{\op{ac}} / K) \right),
\end{aligned}
$$
i.e., $\sigma$ projects to a generator of $\op{Gal}(K_{\op{cyc}} / K)$ and is trivial on $K_{\op{ac}}$, while $\tau$ projects to a generator of $\op{Gal}(K_{\op{ac}} / K)$ and is trivial on $K_{\op{cyc}}$. Let $\Lambda_\infty$ be the Iwasawa algebra $\mathbb{Z}_p\llbracket G_\infty\rrbracket$ which is identified with $\mathbb{Z}_p\llbracket X, Y\rrbracket$ where $X:=\sigma-1$ and $Y:=\tau-1$. Let $\mathcal{K} / K$ be a $\mathbb{Z}_p$-extension. Then there exists a unique point $(a , b) \in \mathbb{P}^1(\mathbb{Z}_p)$ such that $
\overline{\langle \sigma^a \tau^b \rangle} = \ker\left(G_\infty \to \operatorname{Gal}(\mathcal{K} / K)\right)
$. The $\Z_p$-extension associated to $(a,b)$ is denoted $K_{a,b}$. Define \[\Gamma_{a, b} = \operatorname{Gal}(K_{a, b} / K), \quad H_{a, b} = \operatorname{Gal}(K_\infty / K_{a, b}),\quad\Lambda_{a, b} = \mathbb{Z}_p\llbracket \Gamma_{a, b} \rrbracket.\] Let $\pi_{a, b} : \Lambda \to \Lambda_{a, b}$ denote the map induced by the natural projection $G_\infty \to \Gamma_{a, b}$. Set $f_{a, b} = (1 + X)^a (1 + Y)^b - 1$. Define $\Gamma_{\op{cyc}} := \Gamma_{1,0}$, $\Gamma_{\op{ac}} := \Gamma_{0,1}$ and associated Iwasawa algebras\[\Lambda_{\op{cyc}} := \Lambda_{1,0}=\Z_p\llbracket \Gamma_{\op{cyc}}\rrbracket \text{ and } \Lambda_{\op{ac}} :=\Lambda_{0,1}=\Z_p\llbracket \Gamma_{\op{ac}}\rrbracket.\]The specialization $(a , b) = (1 , 0)$ gives $\Lambda_{\op{cyc}} = \mathbb{Z}_p\llbracket Y \rrbracket$, and $(a , b) = (0 , 1)$ yields $\Lambda_{\op{ac}} = \mathbb{Z}_p\llbracket X \rrbracket$.

\subsection{Iwasawa invariants of $\Z_p\llbracket T\rrbracket$-modules}

\par Consider the $1$-variable Iwasawa algebra $\Lambda:=\Z_p\llbracket T\rrbracket$ and let $M$ be a module over $\Lambda$. The \emph{Pontryagin dual} of $M$ is defined by $M^\vee := \operatorname{Hom}_{\Z_p}(M, \Q_p/\Z_p)$. It is said that $M$ is \emph{cofinitely generated} (respectively, \emph{cotorsion}) over $\Lambda$ if $M^\vee$ is finitely generated (respectively, torsion) as a $\Lambda$-module. A monic polynomial $f(T) \in \Z_p\llbracket T \rrbracket$ is called \emph{distinguished} if all of its non-leading coefficients are divisible by $p$. Let $M$ and $M'$ be cofinitely generated and cotorsion $\Lambda$-modules. The modules $M$ and $M'$ are \emph{pseudo-isomorphic} if there exists a $\Lambda$-module homomorphism $\phi: M \to M'$ whose kernel and cokernel are finite. The structure theorem for finitely generated torsion $\Lambda$-modules implies that any cofinitely generated and cotorsion $\Lambda$-module $M$ is pseudo-isomorphic to a module $M'$ whose Pontryagin dual decomposes as a finite direct sum of cyclic torsion $\Lambda$-modules:
\begin{equation} \label{structure isomorphism}
(M')^\vee \simeq \left( \bigoplus_{i=1}^s \frac{\Z_p\llbracket T\rrbracket}{(p^{n_i})} \right) \oplus \left( \bigoplus_{j=1}^t \frac{\Z_p\llbracket T\rrbracket}{(f_j(T))} \right),
\end{equation}
where $s, t \geq 0$, $n_i \in \Z_{\geq 1}$, and each $f_j(T)$ is a distinguished polynomial. Based on this decomposition, the Iwasawa invariants of $M$ are defined by

$$
\mu_p(M) := \sum_{i=1}^s n_i \quad \text{and} \quad \lambda_p(M) := \sum_{j=1}^t \deg f_j.
$$
\noindent By convention, if $s = 0$ (respectively, $t = 0$), the sum $\sum_{i=1}^s n_i$ (respectively, $\sum_{j=1}^t \deg f_j$) is taken to be zero. The Iwasawa invariants are well defined, i.e., independent of the decomposition \eqref{structure isomorphism}.

\begin{lemma} \label{basic lemma on lambda and corank}
Let $M$ be a cofinitely generated and cotorsion $\Z_p\llbracket T \rrbracket$-module. Then the following conditions are equivalent:
\begin{enumerate}
\item The module $M$ is cotorsion and $\mu_p(M) = 0$.
\item The module $M$ is cofinitely generated as a $\Z_p$-module,
\item $M[p]$ is finite.
\end{enumerate}
\end{lemma}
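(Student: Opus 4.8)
The plan is to pass to Pontryagin duals and reduce everything to the structure theorem \eqref{structure isomorphism}. Write $X := M^\vee$, which by hypothesis is a finitely generated torsion $\Lambda$-module, $\Lambda = \Z_p\llbracket T\rrbracket$. Fix a pseudo-isomorphism $X \to E$, where
\[
E = \left(\bigoplus_{i=1}^s \Lambda/(p^{n_i})\right) \oplus \left(\bigoplus_{j=1}^t \Lambda/(f_j(T))\right)
\]
is the elementary module of \eqref{structure isomorphism}, so that $\mu_p(M) = \sum_i n_i$; in particular, since $M$ is cotorsion by hypothesis, condition (1) amounts to $s = 0$. The strategy is to rephrase each of the three conditions as a statement about $X$: cofinite generation of $M$ over $\Z_p$ is equivalent to $X$ being finitely generated over $\Z_p$, while finiteness of $M[p]$ is converted by duality into finiteness of $X/pX$.

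First I would record the two duality computations. Applying the exact functor $\operatorname{Hom}_{\Z_p}(-, \Q_p/\Z_p)$ to $0 \to M[p] \to M \xrightarrow{p} M$ yields $M^\vee \xrightarrow{p} M^\vee \to (M[p])^\vee \to 0$, whence $(M[p])^\vee \cong X/pX$; thus $M[p]$ is finite if and only if $X/pX$ is finite. Likewise $M$ is cofinitely generated over $\Z_p$ precisely when $X$ is finitely generated over $\Z_p$. These reductions turn the lemma into the purely algebraic claim that, for a finitely generated torsion $\Lambda$-module $X$, the conditions ``$s = 0$'', ``$X$ is finitely generated over $\Z_p$'', and ``$X/pX$ is finite'' are equivalent.

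I would then run the cycle $(1) \Rightarrow (2) \Rightarrow (3) \Rightarrow (1)$. For $(1) \Rightarrow (2)$: if $s = 0$ then $E = \bigoplus_j \Lambda/(f_j(T))$, and by Weierstrass division each quotient $\Lambda/(f_j(T))$ with $f_j$ distinguished is free of rank $\deg f_j$ over $\Z_p$; since $X$ differs from $E$ only by finite kernel and cokernel, $X$ is finitely generated over $\Z_p$. The implication $(2) \Rightarrow (3)$ is immediate, as a finitely generated $\Z_p$-module has finite reduction mod $p$. For $(3) \Rightarrow (1)$ I would transfer the finiteness of $X/pX$ across the pseudo-isomorphism to conclude that $E/pE$ is finite, and then observe that any summand $\Lambda/(p^{n_i})$ contributes $\Lambda/(p) \cong \mathbb{F}_p\llbracket T\rrbracket$ to $E/pE$, which is infinite; hence $s = 0$ and $\mu_p(M) = 0$.

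The main obstacle is this last transfer step, since reduction mod $p$ is only right exact and so the finite kernel and cokernel of $X \to E$ do not directly descend to $X/pX$ and $E/pE$. I would handle it by factoring the pseudo-isomorphism into short exact sequences $0 \to A \to X \to Y \to 0$ and $0 \to Y \to E \to B \to 0$ with $A, B$ finite, and chasing the reductions: right-exactness shows $Y/pY$ is a quotient of $X/pX$, hence finite, and the exact sequence $\operatorname{Tor}_1^{\Lambda}(B, \Lambda/p) \to Y/pY \to E/pE \to B/pB \to 0$ has finite outer terms because $\operatorname{Tor}_1^{\Lambda}(B,\Lambda/p) = B[p]$ and $B/pB$ are finite, forcing $E/pE$ to be finite. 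Everything else is a direct application of the structure theorem together with Weierstrass preparation.
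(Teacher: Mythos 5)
Your proof is correct and takes essentially the same route as the paper: the paper's entire proof is the one-line assertion that the lemma ``follows directly from the structure theorem \eqref{structure isomorphism},'' and your argument is precisely a careful verification of that claim via Pontryagin duality, Weierstrass division, and the transfer of finiteness of $-\otimes_{\Lambda}\Lambda/p$ across the pseudo-isomorphism. The only substantive point the paper leaves implicit --- that finiteness of $X/pX$ passes to $E/pE$ despite reduction mod $p$ being only right exact --- is exactly the step you handle correctly by factoring the pseudo-isomorphism through its image.
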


\begin{proof}
The statement follows directly from the structure theorem \eqref{structure isomorphism}.
\end{proof}

\noindent If $\op{Sel}_{p^\infty}(E/K_{a,b})$ is cotorsion over $\Lambda_{a,b}$, define $\mu_{a,b}$ (resp. $\lambda_{a,b}$) to be its $\mu$-invariant (resp. $\lambda$-invariant). 

\subsection{Greenberg Selmer groups and $p$-congruences}
\par In order to study the effect of $p$-congruences between elliptic curves and modular forms, I recall Greenberg's Selmer groups associated to modular forms. Let $f$ be a normalized $p$-ordinary Hecke eigencuspform of level $N$ and weight $k\geq 2$. Assume that $p\nmid N$. The results in this section will require the \emph{Heegner hypothesis}:
\begin{itemize}
\item[(\mylabel{HH}{\textbf{HH}})]  $N^-=1$, i.e., all primes dividing $N$ split in $K$.
\end{itemize}
\noindent Likewise, I say that an elliptic curve $E$ satisfies \eqref{HH} if the associated modular form does. This condition implies in particular that all primes dividing $Np$ are finitely decomposed in $K_\infty$. Let $F_f$ be the field generated by the Fourier coefficients of $f$ and let $\mathbb{K}$ be the $p$-adic completion of $\iota_p(F_f)\subset \bar{\Q}_p$. I let $\cO$ be the valuation ring of $\mathbb{K}$ and $\varpi$ denote its uniformizer. Denote by $\kappa$ the residue field of $\cO/(\varpi)$. Let $\rho_f: \op{G}_{\Q}\rightarrow \op{Aut}(V_f)\xrightarrow{\sim} \op{GL}_2(\mathbb{K})$ be  the Galois representation associated to $f$. Choose a Galois stable $\cO$-lattice $T_f$ contained in $\cO$ and set $A_f:=V_f/T_f$. Note that in the special case when $f$ has weight $2$ and $F_f=\Q$, the $p$-divisible group $A_f$ equals $E[p^\infty]$, where $E$ is an elliptic curve over $\Q$. I let $\Sigma$ be a finite set of primes containing the primes dividing $Np$ such that all primes $\ell\in \Sigma$ are split in $K$. By class field theory, there is a unique $\Z_p$-extension of $K$ in which $\p$ (resp. $\bar{\p}$) is unramified. Let $\cK=K_{a,b}$ be a $\Z_p$-extension of $K$ in which both the primes $\mathfrak{p}$ and $\bar{\mathfrak{p}}$ are totally ramified. For $\ell\neq p$, set \[\cH_\ell(E/\cK):=\bigoplus_{v\in \Omega_\ell(\cK)} H^1(\cK_v, E[p^\infty]).\] Since $f$ is $p$-ordinary, it follows that for each $w \in \{\p, \bar{\p}\}$, there exists a short exact sequence of $\operatorname{G}_w$-modules
$$
0 \rightarrow A_f^+ \rightarrow A_f \rightarrow A_f^- \rightarrow 0,
$$ where $A_f^+$ denotes the unramified submodule on which $\operatorname{G}_w$ acts via the unramified quotient. Note that if $A_f=E[p^\infty]$, the above sequence coincides with 
\[0\rightarrow \widehat{E}[p^\infty]\rightarrow E[p^\infty]\rightarrow \widetilde{E}[p^\infty]\rightarrow 0, \] where $\widetilde{E}$ is the reduced curve and $\widehat{E}$ is the formal group of $E$. Let $w'|w$ be the prime of $\cK$ which lies above $w$. Denote by $\op{I}_{w'}$ the inertia subgroup of $\op{G}_{w'}$ and let 
\[\pi_w: H^1(\cK_{w'}, A_f)\longrightarrow H^1(\op{I}_{w'}, A_f^-)\] be the composite of maps:
\[H^1(\cK_{w'}, A_f)\rightarrow H^1(\cK_{w'}, A_f^-)\rightarrow H^1(\op{I}_{w'}, A_f^-),\] where the first map is induced by $E\rightarrow \widetilde{E}$ and the second is the restriction map to $\op{I}_{w'}$. Then, the Greenberg condition at $p$ is defined as follows
\[\cH_p(E/\cK):=\frac{H^1(\cK_{\p}, A_f)}{\op{ker} \pi_{\p}}\times \frac{H^1(\cK_{\bar{\p}}, A_f)}{\op{ker} \pi_{\bar{\p}}}.\]
\begin{definition}
    The Greenberg Selmer group is defined as follows:
    \[\op{Sel}_{\op{Gr}}(A_f/\cK):=\op{ker}\left\{ H^1(\Q_\Sigma/\cK, A_f)\longrightarrow \bigoplus_{\ell\in \Sigma} 
 \cH_\ell(A_f/\cK)\right\}.\]
\end{definition}
\noindent For an elliptic curve $E_{/\Q}$, the Greenberg Selmer group $\op{Sel}_{\op{Gr}}(E[p^\infty]/\cK)$ coincides with $\op{Sel}_{p^\infty}(E/\cK)$. In fact, the local conditions defining these Selmer groups coincide over $\cK$, provided the primes $\p$ and $\bar{\p}$ are totally ramified in $\cK$. I define a Selmer group associated with the residual representation $A_f[\varpi]$, which is denoted by $\op{Sel}_{\op{Gr}}(A_f[\varpi]/\cK)$. If $\ell\neq p$, then \[\cH_\ell(A_f[\varpi]/\cK):=\bigoplus_{v\in \Omega_\ell(\cK)} H^1(\cK_v, A_f[\varpi]).\] On the other hand, \[\cH_p(A_f[\varpi]/\cK):=\frac{H^1(\cK_{\p}, A_f[\varpi])}{\op{ker} \bar{\pi}_{\p}}\times \frac{H^1(\cK_{\bar{\p}}, A_f[\varpi])}{\op{ker} \bar{\pi}_{\bar{\p}}},\] where \[\bar{\pi}_w: H^1(\cK_{w'}, A_f[\varpi])\longrightarrow H^1(\op{I}_{w'}, A_f^-[\varpi]).\]   

\begin{definition}
    The Greenberg Selmer group associated with $A_f[\varpi]$ is defined as follows:
    \[\op{Sel}_{\op{Gr}}(A_f[\varpi]/\cK):=\op{ker}\left\{ H^1(\Q_\Sigma/\cK, A_f[\varpi])\longrightarrow \bigoplus_{\ell\in \Sigma} 
 \cH_\ell(A_f[\varpi]/\cK)\right\}.\]
\end{definition}

\begin{proposition}\label{basic mu invariant p torsion lemma}
    Assume that $H^0(K, A_f[\varpi])=0$. Then the following are equivalent:
    \begin{enumerate}
        \item $\op{Sel}_{\op{Gr}}(A_f/\cK)$ is cotorsion over $\Z_p\llbracket T\rrbracket$ with $\mu=0$,
         \item $\op{Sel}_{\op{Gr}}(A_f/\cK)[p]$ is finite,
        \item $\op{Sel}_{\op{Gr}}(A_f[\varpi]/\cK)$ is finite.     
    \end{enumerate}
\end{proposition}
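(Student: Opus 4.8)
The plan is to establish $(1)\Leftrightarrow(2)$ by the structure theory already set up, and then to obtain $(2)\Leftrightarrow(3)$ from a Kummer-sequence comparison between $A_f$ and its residual representation $A_f[\varpi]$. Throughout I write $M:=\op{Sel}_{\op{Gr}}(A_f/\cK)$.

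First I would dispose of $(1)\Leftrightarrow(2)$. The usual global finiteness argument shows that $M$ is cofinitely generated over $\Lambda=\Z_p\llbracket T\rrbracket$, and granting this the equivalence is precisely Lemma \ref{basic lemma on lambda and corank}: for a cofinitely generated $\Lambda$-module, $M[p]$ is finite if and only if $M$ is cotorsion with $\mu_p(M)=0$. Since $\varpi^e=p\cdot(\text{unit})$ in $\cO$, one has $M[\varpi]\subseteq M[p]=M[\varpi^e]$, and a short induction using multiplication by $\varpi$ (the sequence $M[\varpi^{j+1}]/M[\varpi^j]\hookrightarrow M[\varpi]$) shows $M[\varpi]$ finite $\Leftrightarrow M[p]$ finite. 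This lets me freely replace $M[p]$ by $M[\varpi]$ below.

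Next, the comparison $(2)\Leftrightarrow(3)$. The engine is the short exact sequence of $\op{G}_\Q$-modules $0\to A_f[\varpi]\to A_f\xrightarrow{\varpi}A_f\to 0$. Because $\op{Gal}(\cK/K)$ is pro-$p$ and acts on the $p$-group $A_f[\varpi]$, the hypothesis $H^0(K,A_f[\varpi])=0$ propagates to $H^0(\cK,A_f[\varpi])=0$, whence $A_f(\cK)=0$ (any nonzero $p$-power-torsion fixed element would yield a nonzero $\varpi$-torsion fixed element). The long exact sequence in $H^1(\Q_\Sigma/\cK,-)$ then gives an isomorphism $H^1(\Q_\Sigma/\cK,A_f[\varpi])\xrightarrow{\sim}H^1(\Q_\Sigma/\cK,A_f)[\varpi]$. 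I would place this inside the commutative diagram whose rows are the defining sequences of the two Selmer groups, with vertical maps induced by $A_f[\varpi]\hookrightarrow A_f$: the middle vertical map is the isomorphism just obtained, and the right vertical map $\gamma=\bigoplus_\ell\gamma_\ell$ compares the local conditions $\cH_\ell(A_f[\varpi]/\cK)\to\cH_\ell(A_f/\cK)[\varpi]$. The snake lemma yields that the induced map $\op{Sel}_{\op{Gr}}(A_f[\varpi]/\cK)\hookrightarrow\op{Sel}_{\op{Gr}}(A_f/\cK)[\varpi]$ is injective with cokernel embedding into $\ker\gamma=\bigoplus_\ell\ker\gamma_\ell$. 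Injectivity is exactly $(2)\Rightarrow(3)$.

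The converse $(3)\Rightarrow(2)$, and hence the full equivalence, reduces to showing $\ker\gamma$ is finite. For $\ell\ne p$ the local Kummer sequence identifies $\ker\gamma_\ell=\bigoplus_{v\mid\ell}A_f(\cK_v)/\varpi A_f(\cK_v)$; since every $\ell\mid Np$ is finitely decomposed in $\cK$ under \eqref{HH}, there are finitely many $v\mid\ell$, and each $A_f(\cK_v)$ is cofinitely generated over $\cO$ so its $\varpi$-cotorsion is finite. I expect the genuine obstacle to be the term at $\ell=p$, where the local condition is the Greenberg quotient attached to the ordinary filtration $0\to A_f^+\to A_f\to A_f^-\to 0$. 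There one must chase the diagram relating $H^1(\cK_w,A_f[\varpi])/\op{ker}\bar\pi_w$ with $\big(H^1(\cK_w,A_f)/\op{ker}\pi_w\big)[\varpi]$ for $w\in\{\p,\bar\p\}$ and bound the comparison kernel; this comes down to finiteness of local $H^0$-type groups built from $A_f^{\pm}$, again controlled by $H^0(K,A_f[\varpi])=0$ together with $p$-ordinarity. Once $\ker\gamma$ is finite, the cokernel of $\op{Sel}_{\op{Gr}}(A_f[\varpi]/\cK)\hookrightarrow\op{Sel}_{\op{Gr}}(A_f/\cK)[\varpi]$ is finite, so finiteness of the former forces finiteness of the latter, giving $(3)\Rightarrow(2)$ and closing the equivalence.
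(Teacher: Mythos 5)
Your proposal follows essentially the same route as the paper's proof: $(1)\Leftrightarrow(2)$ via Lemma \ref{basic lemma on lambda and corank}, and $(2)\Leftrightarrow(3)$ via the commutative diagram comparing $\op{Sel}_{\op{Gr}}(A_f[\varpi]/\cK)$ with $\op{Sel}_{\op{Gr}}(A_f/\cK)[\varpi]$, using $H^0(\cK, A_f[\varpi])=0$ (propagated from $K$ by the pro-$p$ argument) to make the middle vertical map an isomorphism, and finiteness of the local comparison kernels to bound the cokernel. The one step you flag as a possible obstacle---the kernel of the comparison map at $p$---is treated no more explicitly in the paper (which simply asserts finiteness of $\ker h_\ell$ for all $\ell\in\Sigma$), and it resolves just as you expect: $\ker h_{p,w}$ embeds in $H^0(\op{I}_{w'}, A_f^-)/\varpi$, which is finite because $A_f^-$ is cofinitely generated over $\cO$, with no need of the hypothesis $H^0(K, A_f[\varpi])=0$ at that point.
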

\begin{proof}
    \par The argument is similar to those in Section 2 of \cite{greenbergvatsal00}. For the convenience of the reader, I provide a sketch of the proof here. That (1) and (2) are equivalent follows from Lemma \ref{basic lemma on lambda and corank}. I prove that (2) and (3) are equivalent. It is clear that (2) is equivalent to the condition that $\op{Sel}_{\op{Gr}}(A_f/\cK)[\varpi]$ is finite. There is a natural commutative diagram:\begin{equation}\label{fdiagram}
\begin{tikzcd}[column sep = small, row sep = large]
0\arrow{r} & \op{Sel}_{\op{Gr}}(A_f[\varpi]/\cK) \arrow{r}\arrow{d}{\alpha} & H^1(\Q_\Sigma/\cK, A_f[\varpi])\arrow{r} \arrow{d}{g} & \bigoplus_{\ell\in \Sigma} \cH_\ell(A_f[\varpi]/\cK) \arrow{d}{h} \\
0\arrow{r} & \op{Sel}_{\op{Gr}}(A_f/\cK)[\varpi] \arrow{r} & H^1(\Q_\Sigma/\cK, A_f)[\varpi] \arrow{r}  &\bigoplus_{\ell\in \Sigma} \cH_\ell(A_f/\cK)[\varpi],
\end{tikzcd}
\end{equation}
where $h$ is the direct sum of natural maps 
\[h_\ell: \cH_\ell(A_f[\varpi]/\cK)\rightarrow \cH_\ell(A_f/\cK)[\varpi]\] induced from the Kummer sequence. The primes $\ell\in \Sigma$ are finitely decomposed in $K_\infty$ and it is easy to see that the kernel of $h_\ell$ is finite for every $\ell\in \Sigma$. Since $\cK/K$ is a pro-$p$ extension, the assumption that $H^0(K, A_f[\varpi])=0$ implies that $H^0(\cK, A_f[\varpi])=0$ (see \cite[Corollary 1.6.13]{NSW}). From the Kummer sequence, the map $g$ is an isomorphism. This implies particular that $\op{ker}\alpha=0$ and $\op{cok}\alpha$ injects into the kernel of $h$. The map $h$ has finite kernel and thus the cokernel of $\alpha$ is finite. Thus, I have shown that (2) and (3) are equivalent.
\end{proof}

\begin{proposition}\label{congruence propn}
    Let $\cK/K$ be a $\Z_p$-extension with Iwasawa algebra $\Z_p\llbracket T\rrbracket$. Suppose that $E_{/\Q}$ is an elliptic curve and $f$ is a Hecke eigencuspform with associated module $A_f$. Let $p$ be an odd prime which splits in $K$, at which $E$ (resp. $f$) has good ordinary reduction (resp. is $p$-ordinary). Assume moreover that $E[p]\otimes_{\F_p} \kappa$ is isomorphic to $A_f[\varpi]$ as $\op{G}_{\Q}$-modules. Assume moreover that:
    \begin{enumerate}
        \item $E(K)[p]=0$ and $H^0(K, A_f[\varpi])=0$,
        \item $E$ and $f$ both satisfy the Heegner hypothesis \eqref{HH}.
    \end{enumerate}
     Then, $\op{Sel}_{p^\infty}(E/\cK)$ is cotorsion as a $\Z_p\llbracket T \rrbracket$-module with $\mu=0$ if and only if $\op{Sel}_{\op{Gr}}(A_f/\cK)$ is cotorsion as a $\Z_p\llbracket T \rrbracket$-module with $\mu=0$.
\end{proposition}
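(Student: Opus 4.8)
The plan is to reduce both sides of the equivalence to the finiteness of a residual Greenberg Selmer group, and then to transport finiteness across the congruence. First I would invoke Proposition \ref{basic mu invariant p torsion lemma}. Applied to $f$, whose hypothesis $H^0(K, A_f[\varpi])=0$ is assumed, it gives that $\op{Sel}_{\op{Gr}}(A_f/\cK)$ is cotorsion over $\Z_p\llbracket T\rrbracket$ with $\mu=0$ if and only if $\op{Sel}_{\op{Gr}}(A_f[\varpi]/\cK)$ is finite. Applied to the weight-two form attached to $E$ — for which $A_f=E[p^\infty]$, $\varpi=p$, $\kappa=\F_p$, and the condition $E(K)[p]=0$ is exactly $H^0(K,E[p])=0$ — together with the identification $\op{Sel}_{\op{Gr}}(E[p^\infty]/\cK)=\op{Sel}_{p^\infty}(E/\cK)$ (valid since $\p$ and $\bar{\p}$ are totally ramified in $\cK$, the standing assumption of this subsection), it gives that $\op{Sel}_{p^\infty}(E/\cK)$ is cotorsion with $\mu=0$ if and only if $\op{Sel}_{\op{Gr}}(E[p]/\cK)$ is finite. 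Hence it suffices to prove that $\op{Sel}_{\op{Gr}}(E[p]/\cK)$ is finite if and only if $\op{Sel}_{\op{Gr}}(A_f[\varpi]/\cK)$ is finite.

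For this I would use the given $\op{G}_\Q$-isomorphism $E[p]\otimes_{\F_p}\kappa\simeq A_f[\varpi]$. Since $\op{G}_\Q$ acts trivially on the coefficient factor $\kappa$ and $\kappa$ is a finite-dimensional $\F_p$-vector space, the functor $-\otimes_{\F_p}\kappa$ is exact and commutes with Galois cohomology, with localization, and with restriction to inertia; thus $H^\bullet(\Q_\Sigma/\cK,A_f[\varpi])\simeq H^\bullet(\Q_\Sigma/\cK,E[p])\otimes_{\F_p}\kappa$ and likewise for each local term $\cH_\ell$. Granting that the Greenberg local conditions correspond under this isomorphism, I obtain $\op{Sel}_{\op{Gr}}(A_f[\varpi]/\cK)\simeq \op{Sel}_{\op{Gr}}(E[p]/\cK)\otimes_{\F_p}\kappa$. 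Because $\dim_\kappa(\op{Sel}_{\op{Gr}}(E[p]/\cK)\otimes_{\F_p}\kappa)=\dim_{\F_p}\op{Sel}_{\op{Gr}}(E[p]/\cK)$, one group is finite exactly when the other is, which closes the equivalence.

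The technical heart, and the step I expect to be the main obstacle, is verifying that the Greenberg conditions at the primes above $p$ are identified under $E[p]\otimes_{\F_p}\kappa\simeq A_f[\varpi]$; the conditions at $\ell\neq p$ cause no difficulty, since there $\cH_\ell$ is the full local $H^1$, which base-changes trivially along $-\otimes_{\F_p}\kappa$. Concretely, I must check that the unramified quotient $A_f^-[\varpi]$ corresponds to $\widetilde{E}[p]\otimes_{\F_p}\kappa$, so that the defining maps $\bar{\pi}_w$ and their kernels match. The key point is that the residual ordinary filtration is intrinsic to the local Galois module: restricted to $\op{G}_w$, the submodule $A_f^+[\varpi]$ is ramified — it involves the mod $p$ cyclotomic character, which stays ramified on the inertia subgroup $\op{I}_{w'}$ of $\cK_{w'}$ because $\cK/K$ is pro-$p$ while that character has order dividing $p-1$, prime to $p$ — whereas $A_f^-[\varpi]$ is unramified. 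Consequently $A_f^-[\varpi]$ is the unique (maximal) unramified quotient of $A_f[\varpi]\vert_{\op{G}_w}$, a property preserved by any $\op{G}_w$-equivariant isomorphism; the identical statement holds for $\widetilde{E}[p]\otimes_{\F_p}\kappa$ inside $E[p]\otimes_{\F_p}\kappa$, so the two filtrations correspond and the maps $\bar{\pi}_{\p},\bar{\pi}_{\bar{\p}}$ are identified. This mirrors the residual-representation comparison carried out in Section 2 of \cite{greenbergvatsal00}.
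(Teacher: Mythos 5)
Your proposal follows essentially the same route as the paper: both reduce the statement, via Proposition \ref{basic mu invariant p torsion lemma}, to the finiteness of the residual Selmer groups $\op{Sel}_{\op{Gr}}(E[p]/\cK)$ and $\op{Sel}_{\op{Gr}}(A_f[\varpi]/\cK)$, and then transfer finiteness across the isomorphism $E[p]\otimes_{\F_p}\kappa\simeq A_f[\varpi]$; your verification that the residual ordinary filtration (and hence the Greenberg condition at $p$) is intrinsic to the local Galois module is precisely the detail the paper asserts in one sentence without proof. The only point the paper makes that you omit is the bookkeeping step of defining both residual Selmer groups with respect to the common set $\Sigma$ of primes dividing $NN'p$, which is minor.
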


\begin{proof}
    Let $N$ (resp. $N'$) be the conductor (resp. level) of $E$ (resp. $f$). Denote by $\Sigma$ the set of rational primes $\ell|NN'p$. It is with respect to this choice of $\Sigma$ that the residual Selmer groups $\op{Sel}_{\op{Gr}}(E[p]/\cK)$ and $\op{Sel}_{\op{Gr}}(A_f[\varpi]/\cK)$ are defined. By hypothesis, every prime in $\Sigma$ is split in $K$, hence finitely decomposed in $K_\infty$. Since $E[p]\otimes_{\F_p} \kappa$ and $A_f[\varpi]$ are isomorphic as $\op{G}_{\Q}$-modules, it follows that $\op{Sel}_{\op{Gr}}(E[p]/\cK)$ is finite if and only if $\op{Sel}_{\op{Gr}}(A_f[\varpi]/\cK)$ is finite. The result then follows from Proposition \ref{basic mu invariant p torsion lemma}.
\end{proof}
Assume that $\op{corank}_{\Z_p} \op{Sel}_{p^\infty} (E/K)=1$; I recall a numerical criterion due to Perrin--Riou \cite{PR82} and Schneider \cite{Schneider85} for computing the $\mu$-invariant and $\lambda$-invariant of the Selmer group $\op{Sel}_{p^\infty}(E/K_{\rm{cyc}})$. I introduce conditions on a triple $(E, K, p)$:
\begin{itemize}
\item[(\mylabel{tEC}{\textbf{tEC}})] \begin{itemize}
        \item $\op{corank}_{\Z_p} \op{Sel}_{p^\infty} (E/K)=1$,
        \item $p\nmid \mathcal{R}_p(E/K)$,
        \item $\# \Sh(E/K)[p^\infty]=0$,
        \item $p\nmid c_v(E/K)$ for all primes $v\nmid p$ of $K$,
        \item and $\widetilde{E}(\kappa_v)[p^\infty]=0$ for all primes $v|p$ of $K$.
    \end{itemize}  
\end{itemize}
\begin{theorem}\label{PR and S}
With respect to notation above, the following conditions are equivalent.
\begin{enumerate}
    \item The Iwasawa invariants of $\op{Sel}_{p^\infty}(E/K_{\op{cyc}})$ are given by \[\mu\left(\op{Sel}_{p^\infty}(E/K_{\op{cyc}})\right)=0\text{ and }\lambda\left(\op{Sel}_{p^\infty}(E/K_{\op{cyc}})\right)=1;\]
    \item the condition \eqref{tEC} is satisfied for $E_{/K}$.
\end{enumerate}
\end{theorem}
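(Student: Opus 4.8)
The plan is to reduce the equivalence to the Perrin--Riou--Schneider leading-term formula for the characteristic power series of the cyclotomic Selmer group, supplemented by Mazur's control theorem and an elementary Weierstrass-preparation bookkeeping. Write $X := \op{Sel}_{p^\infty}(E/K_{\op{cyc}})^\vee$, regarded as a module over $\Lambda = \Z_p\llbracket T\rrbracket$, with $T = \gamma - 1$ for $\gamma$ a topological generator of $\Gamma_{\op{cyc}}$. The first step is to record that $X$ is $\Lambda$-torsion: this follows from Kato's theorem \cite{Kato2004} (applied over $K_{\op{cyc}} = K\cdot\Q_{\op{cyc}}$, using that $p$ splits) together with the good ordinary hypothesis, so that $\op{char}_\Lambda(X) = (f_E(T))$ is defined and $\mu,\lambda$ are the Iwasawa invariants of $f_E$. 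By Lemma \ref{basic lemma on lambda and corank} and Weierstrass preparation, condition (1) is equivalent to $f_E \doteq T + pc$ for some $c\in\Z_p$ (a $p$-adic unit times a distinguished polynomial of degree one): indeed $\mu=0$ forces $f_E$ to be, up to a unit, distinguished, and $\lambda=1$ forces its degree to be one.

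Second, I would invoke Mazur's control theorem \cite{MazurmainIwasawa} in the good ordinary case: the restriction map $\op{Sel}_{p^\infty}(E/K) \to \op{Sel}_{p^\infty}(E/K_{\op{cyc}})^{\Gamma_{\op{cyc}}}$ has finite kernel and cokernel, the local error terms being harmless because the primes involved split and $E(K)[p]=0$. Dualizing identifies $\op{Sel}_{p^\infty}(E/K)^\vee$ with $X/TX$ up to finite groups, so the standing hypothesis $\op{corank}_{\Z_p}\op{Sel}_{p^\infty}(E/K)=1$ is equivalent to $\op{rank}_{\Z_p}(X/TX)=1$. For a finitely generated torsion $\Lambda$-module this forces $T\mid f_E$ and, more precisely, that exactly one elementary divisor of $X$ is divisible by $T$. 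In the presence of (1) this pins down $c=0$: the unique distinguished factor $T+pc$ satisfies $X/TX \sim \Z_p/(pc)$, whose $\Z_p$-rank is one only when $c=0$. Hence (1) is in fact equivalent to $f_E \doteq T$, i.e. to $\op{ord}_{T=0}f_E = 1$ with unit leading coefficient.

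Third, the arithmetic heart is the Perrin--Riou--Schneider formula \cite{PR82, Schneider85}, the $p$-adic analogue of the Birch--Swinnerton-Dyer leading-term formula. When the $p$-adic height pairing on $E/K$ is non-degenerate --- equivalently, when $\op{ord}_{T=0}f_E$ equals the corank $1$ --- the first nonzero Taylor coefficient of $f_E$ at $T=0$ equals, up to a $p$-adic unit, a product of the shape
\[
\mathcal{R}_p(E/K)\cdot \#\Sh(E/K)[p^\infty]\cdot \prod_{v\nmid p} c_v(E/K)\cdot \prod_{v\mid p}\big(\#\widetilde{E}(\kappa_v)[p^\infty]\big)^2,
\]
the torsion denominator $(\#E(K)[p^\infty])^2$ being a unit under $E(K)[p]=0$. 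The decisive observation is that each factor is a $p$-adic unit precisely when the corresponding bullet of \eqref{tEC} holds: $p\nmid \mathcal{R}_p$, $\Sh(E/K)[p^\infty]=0$, $p\nmid c_v$, and $\widetilde{E}(\kappa_v)[p^\infty]=0$. Thus \eqref{tEC} is equivalent to the leading coefficient being a unit.

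Finally, I would assemble the two implications. If \eqref{tEC} holds then $\mathcal{R}_p(E/K)\neq 0$, so the height pairing is non-degenerate and $\op{ord}_{T=0}f_E=1$; the formula then yields a unit leading coefficient, whence $f_E = (\text{unit})\cdot T$ by Weierstrass preparation, i.e. $\mu=0$ and $\lambda=1$. Conversely, (1) gives $f_E \doteq T$ by the second step, so $\op{ord}_{T=0}f_E=1$ with unit leading coefficient, and the formula forces each arithmetic factor to be a $p$-adic unit, recovering \eqref{tEC}. The main obstacle is the input formula itself: establishing the precise normalization --- this is why $\mathcal{R}_p = \operatorname{Reg}_p/p$ rather than $\operatorname{Reg}_p$ appears, reflecting the Euler factor at $p$ built into the $p$-adic height pairing --- and matching the control-theorem error terms at the bad primes and above $p$ with the Tamagawa and local-point factors. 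Once these identifications are fixed, both directions are a routine application of the structure theorem.
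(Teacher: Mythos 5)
Your overall route coincides with the paper's: the paper proves Theorem \ref{PR and S} by citing \cite[Corollary 2.17]{muller2024hilbert}, which is exactly the Perrin--Riou--Schneider numerical criterion that you reconstruct (cotorsionness via Kato--Rohrlich, Weierstrass preparation and the structure theorem, Mazur's control theorem, and the $p$-adic leading-term formula). Your direction \eqref{tEC} $\Rightarrow$ (1) is sound: those hypotheses give $\op{rank}E(K)=1$, $\Sh(E/K)[p^\infty]=0$, and a non-degenerate height pairing on $E(K)$ with unit normalized regulator, so the leading coefficient of $f_E$ is a unit and $f_E \doteq T$, i.e.\ $\mu=0$ and $\lambda=1$.

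There is, however, a genuine gap in your converse direction (1) $\Rightarrow$ \eqref{tEC}. The leading-term formula you invoke --- and indeed the equivalence ``height pairing non-degenerate $\Leftrightarrow$ $\op{ord}_{T=0}f_E = \op{corank}_{\Z_p}\op{Sel}_{p^\infty}(E/K)$'' in the classical form of \cite{PR82, Schneider85} --- is established under the hypothesis that $\Sh(E/K)[p^\infty]$ is finite: the formula literally contains the factor $\#\Sh(E/K)[p^\infty]$, and the pairing in question lives on $E(K)\otimes \Q_p$. Under (1) and the standing hypothesis $\op{corank}_{\Z_p}\op{Sel}_{p^\infty}(E/K)=1$, that finiteness is part of what has to be proved. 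Structure theory, the control theorem, and $f_E \doteq T$ cannot by themselves distinguish the intended situation ($\op{rank}E(K)=1$ and $\Sh(E/K)[p^\infty]$ finite) from the a priori possible one in which $\op{rank}E(K)=0$ and $\Sh(E/K)[p^\infty]$ contains a divisible copy of $\Q_p/\Z_p$: in that scenario one can have $\op{Sel}_{p^\infty}(E/K_{\op{cyc}})^\vee \simeq \Z_p$ with trivial Galois action, so that $\mu=0$, $\lambda=1$, the control theorem holds, and yet the third bullet of \eqref{tEC} fails. Your appeal to the formula at this step is therefore circular: you use non-degeneracy (hence, implicitly, finiteness of $\Sh$) to apply the formula, and then read off finiteness of $\Sh$ from the formula. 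Closing this gap requires the refined formulation in which the height pairing is defined on the compact Selmer group $\mathcal{S}el(T/K)$, whose $\Z_p$-rank equals the Selmer corank whether or not $\Sh$ has a divisible part, together with an argument that a nonzero Tate module of $\Sh$ is incompatible with $\op{ord}_{T=0}f_E=1$; this is precisely the content packaged into the citation that constitutes the paper's proof, and your sketch does not address it.
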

\begin{proof}
I refer to \cite[Corollary 2.17]{muller2024hilbert} for the proof of the result.
\end{proof}

\section{Propagating the growth number conjecture via congruences}

\subsection{Prior results}
\par I recall some prior results related to Mazur's growth number conjecture, which will be instrumental in the proofs of the main results. These results make use of the structure of Selmer groups and height pairings in the anticyclotomic Iwasawa setting. The generalized Heeger hypothesis implies that there is a compatible system of Heegner classes $z_n\in \mathcal{S}el(T/K_{\rm{ac},n})$, whose inverse limit gives rise to a class 
\[z_\infty =(z_n)_{n \ge 0}\in \cS el(T/K_{\rm{ac}}).\] 
Let $\mathcal{J}$ denote the augmentation ideal of $\Lambda_{\cyc}$. Perrin--Riou \cite{PR} defines a height pairing, which for compact Selmer groups at the \emph{n-th level} is given by \[\langle \cdot, \cdot \rangle_n: \mathcal{S}el(T/K_{\rm{ac},n})\times \mathcal{S}el(T/K_{ \rm{ac},n})\longrightarrow p^{-k} \Z_p\otimes_{\Z_p} \mathcal{J}/\mathcal{J}^2,\]
where $k\geq 0$ is an integer, which is independent of $n$. These pairings at finite layers are then interpolated into a $\Lambda_{\rm{ac}}$ height pairing:
\[
\langle \cdot , \cdot \rangle_{\Lambda_{\ac}} : \cS el (T/K_{\rm{ac}}) \times \cS el (T/K_{\rm{ac}}) \longrightarrow \mathbb{Q}_p \otimes_{\mathbb{Z}_p} \Lambda_{\ac} \otimes_{\mathbb{Z}_p} \mathcal{J} / \mathcal{J}^2,\] which is defined as follows:
\[
\langle a_\infty , b_\infty \rangle_{\Lambda_{\ac}} = \varprojlim_{n} \sum_{\theta \in \op{Gal}(K_{\ac,n}/K)} \langle a_n , b_n^\theta \rangle_{n}\cdot \theta.
\]

By the work of Hida \cite{Hida2var2} and Perrin--Riou \cite{PRHeegner, PRpadicLfunction}, there exists a $2$-variable $p$-adic L-function $L_p(X, Y)\in \Lambda_\infty \otimes_{\Z_p} \Q_p$. The following result is a culmination of work of Howard \cite{HowardIwasawatheoreticGZ}, Castella \cite{Castella} and Disegni \cite{disegni1, Disegni2}.

\begin{theorem}
Let $p \geq 5$ be an odd prime number and $E_{/\Q}$ be an elliptic curve with good ordinary reduction at $p$. Let $K$ be an imaginary quadratic field such that the prime $p$ splits in $K$. Assume that $N^-$ is the squarefree product of an even number of primes. Then, 
\[\Big\langle \frac{\partial L_p}{\partial Y}(X,0) \Big\rangle = \Big\langle \langle z_\infty, z_\infty \rangle_{\Lambda_{\ac}} \Big\rangle\]
as principal ideals in $\Q_p\otimes \Lambda_{\rm{ac}}$.
\end{theorem}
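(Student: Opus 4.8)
The plan is to establish the identity as a $\Lambda_{\ac}$-adic $p$-adic Gross–Zagier formula: I would show that both principal ideals arise as the interpolation over the anticyclotomic tower of Perrin–Riou's finite-layer $p$-adic Gross–Zagier formula, in which the cyclotomic derivative of the two-variable $p$-adic $L$-function, restricted to the anticyclotomic line, is expressed through the cyclotomic $p$-adic height of Heegner points. The generalized Heegner hypothesis \eqref{GHH} together with the splitting of $p$ in $K$ forces the sign of the relevant functional equation to be $-1$; this guarantees that $L_p(X,0)$ vanishes identically along $Y=0$, so that $\frac{\partial L_p}{\partial Y}(X,0)$ is genuinely the leading term and the object to be matched with a height pairing.

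First I would recall Perrin–Riou's formula at each finite layer $K_{\ac,n}$: the cyclotomic-direction derivative of $L_p$, evaluated along the anticyclotomic line, equals up to an explicit interpolation factor the cyclotomic $p$-adic height $\langle z_n, z_n\rangle_n$ of the Heegner point $z_n\in\cS el(T/K_{\ac,n})$. The norm-compatibility of the system $(z_n)_n$—which is exactly what produces $z_\infty=(z_n)_n\in\cS el(T/K_{\ac})$—is indispensable here, since it is what permits the finite-layer formulas to be organized into a single inverse limit. By the very definition of $\langle\cdot,\cdot\rangle_{\Lambda_{\ac}}$, the Galois-twisted values $\langle z_n, z_n^\theta\rangle_n$ for $\theta\in\op{Gal}(K_{\ac,n}/K)$ assemble into the $\Lambda_{\ac}$-adic self-pairing $\langle z_\infty, z_\infty\rangle_{\Lambda_{\ac}}$, whose target $\mathcal{J}/\mathcal{J}^2$ records precisely the cyclotomic cotangent direction matched by $\partial/\partial Y$.

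Next I would match the interpolated $L$-derivatives against the two-variable $p$-adic $L$-function. Using Castella's identification of Howard's $\Lambda$-adic (big) Heegner class with $L_p(X,Y)$, one shows that the group-ring interpolation $\varprojlim_n\sum_\theta(\cdot)\theta$ of the finite-layer derivatives coincides, as a principal ideal of $\Q_p\otimes\Lambda_{\ac}$, with $\frac{\partial L_p}{\partial Y}(X,0)$: the Galois averaging on the height side is precisely the interpolation defining $L_p$ in the anticyclotomic variable $X$, while the first-order term in $Y$ corresponds to the height's landing in $\mathcal{J}/\mathcal{J}^2$. Disegni's general $p$-adic Gross–Zagier formula supplies the local and global $p$-adic height computations needed to promote the finite-layer statements to an equality of leading terms, and in particular controls the interpolation factors so that no spurious zero or pole is introduced—yielding an equality of principal ideals rather than a one-sided divisibility.

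The main obstacle is the $p$-adic Gross–Zagier computation itself: expressing the cyclotomic $p$-adic height of Heegner points through the derivative of $L_p$ demands a delicate analysis of the local height contributions at $p$ (where $E$ is ordinary and $p$ splits in $K$) and at the bad and ramified places, and it is here that essentially all of the work of Howard, Castella and Disegni is concentrated. A secondary difficulty is the bookkeeping of normalizations across the tower, so that the inverse limit defining $\langle z_\infty, z_\infty\rangle_{\Lambda_{\ac}}$ converges to the same generator—up to a unit of $\Q_p\otimes\Lambda_{\ac}$—as $\frac{\partial L_p}{\partial Y}(X,0)$; since the claim is only at the level of principal ideals, this reduces to checking that the interpolation factors are units, which in turn rests on the good ordinary and split hypotheses at $p$.
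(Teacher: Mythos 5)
Your proposal is correct and coincides with the paper's treatment: the paper does not reprove this statement but quotes it as the culmination of the work of Howard, Castella and Disegni, citing \cite[Theorem C (4)]{disegni1} (see also \cite[Theorem 2.5]{kundulei2025}), which is exactly the $\Lambda_{\ac}$-adic Gross--Zagier architecture you describe. Your sketch --- interpolating Perrin--Riou's finite-layer height formula through the norm-compatible Heegner system $(z_n)_n$, with Castella's identification and Disegni's universal $p$-adic Gross--Zagier formula supplying the match with $\frac{\partial L_p}{\partial Y}(X,0)$ as principal ideals --- is a faithful outline of the content of those cited works, so it is essentially the same approach.
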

\begin{proof}
    The result follows from \cite[Theorem C (4)]{disegni1}; see also \cite[Theorem 2.5]{kundulei2025} and the references therein.
\end{proof}
\par I shall discuss the main result in \cite{kundulei2025}, and the necessary hypotheses. Let $I\subset \Lambda_\infty$ denote the characteristic ideal of the dual Selmer group $\op{Sel}_{p^\infty}(E/K_\infty)^\vee$. The Iwasawa main conjecture predicts the following equality: 
\begin{equation}
\label{IMC}
\tag{\textbf{IMC}} \langle L_p(X,Y) \rangle = I.     
\end{equation}
\noindent Let $\rho_E:\op{G}_{\Q}\rightarrow \op{GL}_2(\F_p)$ be the Galois representation on $E[p]$, and for a prime $\ell$, denote by $\op{I}_\ell$ the inertia subgroup of $\op{G}_{\Q_\ell}$. The set of \emph{vexing primes} of $E[p]$ is defined as follows 
\[\mathcal{V}:=\{\ell\equiv -1\pmod{p}\mid \bar{\rho}_{E|\op{G}_{\Q_\ell}}\text{ is irreducible, and } \bar{\rho}_{E|\op{I}_\ell}\text{ is reducible}\}.\]
\begin{theorem}[Burungale, Castella, Skinner]\label{BCS-IMC}
    Let $E_{/\Q}$ be an elliptic curve and $p\geq 5$ be a prime at which $E$ has good ordinary reduction. Let $K$ be an imaginary quadratic field in which $p$ splits. Assume that the following conditions are satisfied:
    \begin{enumerate}
        \item $\rho_E$ is surjective.
        \item The discriminant $D_K$ of $K$ is odd, and $D_K\neq -3$.
        \item Assume that $E_{/K}$ satisfies the Heegner hypothesis \eqref{HH}.
        \item The set of vexing primes $\mathcal{V}$ is empty.
    \end{enumerate}
    Then, $(E_{/K}, p)$ satisfies \eqref{IMC}, i.e., 
 \[\langle L_p(X,Y) \rangle = \op{char}_{\Lambda_\infty}\left(\op{Sel}_{p^\infty}(E/K_\infty)^\vee\right).\]
\end{theorem}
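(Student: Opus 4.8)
The statement is the full two-variable main conjecture over the $\Z_p^2$-extension $K_\infty$, so the plan is to establish the two divisibilities separately inside $\Lambda_\infty \cong \Z_p\llbracket X,Y\rrbracket$, a regular local UFD. Under the stated hypotheses one first checks that $\op{Sel}_{p^\infty}(E/K_\infty)^\vee$ is $\Lambda_\infty$-torsion and that $L_p(X,Y)\neq 0$ (for instance by exhibiting a nonzero bottom Euler or Heegner class and invoking a nonvanishing result for $L_p$), so that both $\op{char}_{\Lambda_\infty}\!\left(\op{Sel}_{p^\infty}(E/K_\infty)^\vee\right)$ and $\langle L_p\rangle$ are principal; it then suffices to show that the two generators are associates.

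For the first divisibility I would run a two-variable Euler system argument. Since $p$ splits in $K$ and $E$ satisfies \eqref{HH}, the restriction $T|_{\op{G}_K}$ sits in a self-dual Rankin--Selberg family obtained by convolving the form attached to $E$ with a Hida family of theta series for $K$; the Beilinson--Flach Euler system of Kings--Loeffler--Zerbes (equivalently, the $\Lambda$-adic Heegner point system of Howard and Castella) then produces a norm-compatible family of classes over $K_\infty$. The surjectivity of $\rho_E$ and the emptiness of the vexing set $\mathcal{V}$ supply exactly the big-image and local ramification input required by the Euler/Kolyvagin system machinery, while the two-variable explicit reciprocity law identifies the image of the bottom class with $L_p(X,Y)$. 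This yields the bound $\op{char}_{\Lambda_\infty}\!\left(\op{Sel}_{p^\infty}(E/K_\infty)^\vee\right)\mid L_p$, i.e.\ that the Selmer group is no larger than predicted.

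To upgrade this divisibility to an equality I would specialize along the cyclotomic line rather than reprove the opposite divisibility directly. Writing $L_p = g\cdot h$ with $g$ a generator of $\op{char}_{\Lambda_\infty}\!\left(\op{Sel}_{p^\infty}(E/K_\infty)^\vee\right)$, I would apply the cyclotomic projection $\pi_{\cyc}\colon\Lambda_\infty\to\Lambda_{\cyc}$ and invoke the one-variable main conjecture for $E/K_{\cyc}$ (Kato together with Skinner--Urban, which holds under the present hypotheses after decomposing $E/K$ into $E$ and its quadratic twist), combined with a control theorem matching $\pi_{\cyc}(g)$ with the characteristic ideal of the cyclotomic dual Selmer group and $\pi_{\cyc}(L_p)$ with the cyclotomic $p$-adic $L$-function. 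The resulting equality $\langle\pi_{\cyc}(g)\rangle=\langle\pi_{\cyc}(L_p)\rangle$ forces $\pi_{\cyc}(h)$ to be a unit; since $\Lambda_\infty$ is local with maximal ideal $(p,X,Y)$ and $\pi_{\cyc}$ detects membership in this ideal, $h$ is itself a unit, giving $\langle g\rangle=\langle L_p\rangle$.

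The hard part is the Euler system divisibility: constructing the correct two-variable classes, proving the two-variable explicit reciprocity law that matches their image to $L_p(X,Y)$, and verifying the big-image and local hypotheses that render the Kolyvagin system bound applicable --- this is precisely where the conditions on $\rho_E$, on $D_K$, and on $\mathcal{V}$ enter. A secondary difficulty lies in the control-theorem input for the specialization step, where one must rule out pseudo-null submodules obstructing the comparison of characteristic ideals and confirm that $\pi_{\cyc}$ genuinely recovers the one-variable $L$-function and Selmer group.
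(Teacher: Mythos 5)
You should first be aware that the paper does not prove this statement at all: its entire proof reads ``This result is \cite[Theorem 1.4.1]{BCS}.'' The theorem is imported as a black box from Burungale--Castella--Skinner, and hypotheses (1)--(4) are simply the hypotheses under which that citation applies. Your proposal is therefore not an alternative route to the paper's argument; it is an attempt to reconstruct the BCS theorem itself, and judged as such it has two genuine gaps.

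The first gap is structural. Under the Heegner hypothesis \eqref{HH}, every finite-order anticyclotomic character $\chi$ of conductor prime to $N D_K$ makes $L(E/K,\chi,s)$ self-dual with root number $-1$, so all of these central values vanish and the restriction of $L_p(X,Y)$ to the anticyclotomic line is \emph{identically zero}. Consequently there is no ``two-variable explicit reciprocity law identifying the image of the bottom class with $L_p(X,Y)$'' of the kind your Euler-system step requires: in this sign $-1$ regime the image of the $\Lambda$-adic Heegner class under the relevant regulator map is not $L_p$ but a BDP-type $L$-function (equivalently, an anticyclotomic derivative of $L_p$), and the Beilinson--Flach classes of Kings--Loeffler--Zerbes are emphatically \emph{not} ``equivalent'' to the Heegner-point system --- conflating them hides exactly the degeneration that makes this case hard. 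This is why BCS do not argue as you propose: they first prove the anticyclotomic (BDP) main conjecture --- one divisibility from Wan's Eisenstein-congruence work on unitary groups, the reverse divisibility from the Heegner-point Kolyvagin system of Howard/Castella together with the BDP reciprocity law --- and only then transfer, by comparing Selmer structures, to the two-variable statement \eqref{IMC}. Relatedly, your reading of hypotheses (2) and (4) is off: the conditions $D_K$ odd, $D_K\neq -3$, and $\mathcal{V}=\emptyset$ are not ``big-image and local ramification input for Kolyvagin's induction''; they are needed for the construction and integrality of the BDP/Rankin--Selberg objects and for the minimal level/multiplicity-one comparisons of Hecke algebras underlying the Eisenstein-congruence divisibility. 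The second gap is in your upgrade step: the cyclotomic main conjecture for $E/K$ is invoked ``by Kato together with Skinner--Urban,'' but Skinner--Urban's theorem carries hypotheses not implied by (1)--(4) --- most notably the existence of a prime $q$ exactly dividing $N$ at which $\overline{\rho}_E$ is ramified, plus conditions relative to $K$ --- and a principal point of BCS is precisely to treat cases where that input is unavailable. (Your unit-detection argument itself is fine: $\pi_{\cyc}$ is a local homomorphism of local rings, so a unit specialization forces a unit; and the pseudo-nullity/control caveats you flag are real but secondary compared to the two gaps above.)
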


\begin{proof}
    This result is \cite[Theorem 1.4.1]{BCS}.
\end{proof}
 
 The results in \cite{kundulei2025} require one inclusion of the Iwasawa main conjecture:
\begin{equation}
    \label{h-IMC} \tag{\textbf{h-IMC}} L_p(X,Y) \in I.
\end{equation}

\begin{theorem}[Kundu, Lei]\label{kundu lei theorem}
Let $p\geq 5$ be a prime and $E_{/\Q}$ be an elliptic curve with analytic rank $1$, without complex multiplication. Let $K$ be an imaginary quadratic field where $p$ splits. Assume that:
\begin{enumerate}
    \item $N^-$ is the squarefree product of an even number of primes, 
    \item $E(K)[p]=0$, 
    \item one divisibility of the main conjecture \eqref{h-IMC} holds, 
    \item $\langle z_\infty, z_\infty\rangle_0\neq 0$.
\end{enumerate}
Then, Conjecture \ref{Mazur-GNC} holds for $E_{/K}$.
\end{theorem}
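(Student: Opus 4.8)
The plan is to reduce Conjecture \ref{Mazur-GNC} to the computation of a single $\Lambda_{a,b}$-invariant for each $\Z_p$-extension $K_{a,b}$, and then to read that invariant off from a factorization of the two-variable $p$-adic $L$-function. By Mazur's control theorem together with the structure theory of $\Lambda_{a,b}$-modules, for every $\Z_p$-extension one has
\[
\op{corank}_{\Z_p}\op{Sel}_{p^\infty}(E/K_{a,b,n}) = c\,p^n + O(1),\qquad c:=\op{corank}_{\Lambda_{a,b}}\op{Sel}_{p^\infty}(E/K_{a,b}),
\]
so it suffices to prove $c=0$ when $K_{a,b}\neq K_{\op{ac}}$ and $c=1$ when $K_{a,b}=K_{\op{ac}}$. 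Since $E$ has no complex multiplication the pair $(E,K)$ is generic, and the generalized Heegner hypothesis forces the global root number of $L(s,E/K)$ to be $-1$; thus the sign of $(E,K)$ is $-1$, and these two values of $c$ are exactly what the conjecture predicts.

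The central step is to factor $L_p$. Because the sign is $-1$, the anticyclotomic specialization of $L_p$ vanishes identically, i.e. $\pi_{0,1}(L_p)=L_p(X,0)=0$; equivalently $f_{0,1}=Y$ divides $L_p$ in $\Lambda_\infty\otimes\Q_p$, and I would write $L_p(X,Y)=Y\cdot\widetilde{L}_p(X,Y)$. The $p$-adic Gross--Zagier formula recorded above identifies $\widetilde{L}_p(X,0)=\tfrac{\partial L_p}{\partial Y}(X,0)$ with a generator of the ideal $\big\langle\langle z_\infty,z_\infty\rangle_{\Lambda_{\op{ac}}}\big\rangle$ in $\Q_p\otimes\Lambda_{\op{ac}}$. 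Hypothesis (4), the nonvanishing $\langle z_\infty,z_\infty\rangle_0\neq 0$, says precisely that this class is nonzero at the augmentation $X=0$, so $\widetilde{L}_p(0,0)\neq 0$.

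For a $\Z_p$-extension with $(a,b)\neq(0,1)$ I would first check that $f_{a,b}=(1+X)^a(1+Y)^b-1$ does not divide $L_p$. The element $f_{a,b}$ is an irreducible (prime) element of $\Lambda_\infty$ that vanishes at the origin, whereas $\widetilde{L}_p(0,0)\neq 0$; hence $f_{a,b}\nmid\widetilde{L}_p$. Moreover $f_{a,b}\nmid Y=f_{0,1}$ for $(a,b)\neq(0,1)$, since distinct points of $\mathbb{P}^1(\Z_p)$ yield non-associate primes. Therefore $f_{a,b}\nmid Y\widetilde{L}_p=L_p$. The one-sided divisibility \eqref{h-IMC} gives $\op{char}_{\Lambda_\infty}\op{Sel}_{p^\infty}(E/K_\infty)^\vee\mid L_p$, so $f_{a,b}$ is coprime to this characteristic ideal. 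A standard control argument—using $E(K)[p]=0$ and the Heegner hypothesis to make the defining restriction maps have finite kernel and cokernel—identifies $\op{Sel}_{p^\infty}(E/K_{a,b})^\vee$ up to pseudo-isomorphism with the $f_{a,b}$-coinvariants of $\op{Sel}_{p^\infty}(E/K_\infty)^\vee$, and coprimality of $f_{a,b}$ with the characteristic ideal makes these coinvariants $\Lambda_{a,b}$-torsion. Hence $c=0$ and the conjecture holds in every non-anticyclotomic direction.

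It remains to treat $K_{\op{ac}}=K_{0,1}$, where I must produce $c=1$; here the main conjecture plays no role. The nonvanishing of the height pairing forces $z_\infty$ to be non-torsion in $\cS el(T/K_{\op{ac}})$, since a torsion class pairs to zero. Howard's structure theorem for the Iwasawa module of Heegner classes then bounds $\op{corank}_{\Lambda_{\op{ac}}}\op{Sel}_{p^\infty}(E/K_{\op{ac}})$ above by $1$, while the non-torsion class $z_\infty$ bounds it below by $1$; hence it equals $1$, giving $c=1$ and matching the generic sign $-1$ prediction. I expect the main obstacle to be organizational rather than any single hard estimate: one must pin down conventions so that the anticyclotomic specialization is literally $Y\mapsto 0$, justify that sign $-1$ indeed yields $L_p(X,0)=0$, and verify that the control theorem applies with finite error under the stated hypotheses (including for the finitely many $\Z_p$-extensions in which one of $\p,\bar{\p}$ is unramified). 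Granting these, the single factorization $L_p=Y\widetilde{L}_p$ with $\widetilde{L}_p(0,0)\neq 0$ simultaneously forces cotorsion in every non-anticyclotomic direction and isolates the one anticyclotomic direction in which the corank jumps to one.
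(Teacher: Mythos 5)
The paper does not actually prove this theorem: it is quoted from \cite{kundulei2025}, and the only place its mechanism appears is the proof of Proposition \ref{main long propn}, where the author re-runs the Kundu--Lei argument (citing the proof of their Theorem 3.1). Measured against that, your treatment of the non-anticyclotomic directions is essentially their argument in different packaging: the paper expands each projection $\pi_{a,b}(L_p)$ as $\alpha_{a,b}T+\cdots$ with $\alpha_{a,b}\sim\langle z_\infty,z_\infty\rangle_0$, while you factor $L_p=Y\widetilde{L}_p$ globally and evaluate $\widetilde{L}_p$ at the origin; these are the same computation, and your deduction (primality of $f_{a,b}$, coprimality with the characteristic ideal via \eqref{h-IMC}, control, hence cotorsion and growth number $0$) is the expected route.

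The genuine weak point is your anticyclotomic step. You invoke ``Howard's structure theorem'' and assert that ``the main conjecture plays no role'' there; but Howard's theorem \cite{HowardIwasawatheoreticGZ} is proved under hypotheses not granted here --- the classical Heegner hypothesis $N^-=1$ (the present theorem assumes only \eqref{GHH}, which allows $N^->1$) and an irreducibility/big-image assumption on $E[p]$ strictly stronger than $E(K)[p]=0$. Two observations repair this within the stated hypotheses. First, when $N^-\neq 1$, the primes dividing $N^-$ are inert in $K$, and an inert prime's Frobenius in $\op{Gal}(K_{\op{ac}}/K)$ is both fixed and inverted by complex conjugation, hence trivial; such primes split completely in $K_{\op{ac}}$, so $K_{\op{ac}}$ is excluded by the finite-decomposition hypothesis of Conjecture \ref{Mazur-GNC}, and the anticyclotomic case is vacuous. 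Second, when $N^-=1$, the upper bound $\op{corank}_{\Lambda_{\op{ac}}}\op{Sel}_{p^\infty}(E/K_{\op{ac}})\leq 1$ does come from the main conjecture, contrary to your claim: \eqref{h-IMC} gives $\op{char}\mid L_p=Y\widetilde{L}_p$, and $\widetilde{L}_p(0,0)\neq 0$ forces $\op{ord}_Y(\op{char})\leq 1$, which via control (Proposition \ref{control propn}) and the structure theorem bounds the anticyclotomic corank by $1$; your lower bound from the non-torsion class $z_\infty$ (which you correctly derive from hypothesis (4) rather than from Cornut--Vatsal) then pins it at exactly $1$. With that substitution your outline is correct and consistent with the source; as written, the Howard-dependent step is a citation whose hypotheses you cannot verify.
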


\subsection{Main results}

\par In this section, $p\geq 5$ and $E_{/\Q}$ be an elliptic curve with good ordinary reduction at $p$, such that $E_{/K}$ satisfies \eqref{HH}, \eqref{h-IMC} and \eqref{tEC}. I show that these conditions imply that the Selmer group $\op{Sel}_{p^\infty}(E/K_\infty)$ is semisimple as a module over $\Lambda_\infty$.

\begin{definition}A module $M$ is said to satisfy (\mylabel{hyp:SC}{\textbf{S-C}}) if it is a direct sum of cyclic torsion $\Lambda_\infty$-modules.
\end{definition}
\noindent When $M$ satisfies \eqref{hyp:SC}, the characteristic ideal of $M$ is given by $\op{char}(M):=\prod I_i$ where $M\simeq \bigoplus_i \frac{\Lambda_\infty}{(I_i)}$. In this case, $\op{char}(M)$ is necessarily principal, cf. \cite[Theorem 2.4]{GHKL}. 

\begin{proposition}
\label{prop:SC-ord}
Assume that $\lambda\left(\operatorname{Sel}_{p^\infty}(E/K_{\mathrm{cyc}})^\vee\right) \leq 1$ and $\mu\left(\operatorname{Sel}_{p^\infty}(E/K_{\mathrm{cyc}})^\vee\right) = 0$. Then $\operatorname{Sel}_{p^\infty}(E/K_\infty)^\vee$ satisfies \eqref{hyp:SC}.
\end{proposition}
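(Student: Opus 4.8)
The plan is to prove the stronger assertion that $M:=\op{Sel}_{p^\infty}(E/K_\infty)^\vee$ is a \emph{cyclic} torsion $\Lambda_\infty$-module (or zero), which is a special case of \eqref{hyp:SC}. I will use two inputs available under the running hypotheses: that $M$ is finitely generated and torsion over $\Lambda_\infty$ with no nonzero pseudo-null submodule, and that the cyclotomic dual Selmer group $M_{\op{cyc}}:=\op{Sel}_{p^\infty}(E/K_{\op{cyc}})^\vee$ has no nonzero finite submodule — both being standard consequences of the irreducibility of the residual representation together with good ordinary reduction, in the spirit of Greenberg's foundational results.

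First I would descend to one variable. Set $\Lambda':=\Z_p\llbracket X\rrbracket$, so that $\Lambda_\infty=\Lambda'\llbracket Y\rrbracket$ and the cyclotomic specialization is reduction modulo $X$, i.e. $\Lambda_{\op{cyc}}=\Lambda_\infty/(X)$. Since $E(K)[p]=0$, the control theorem identifies $M/XM$ with $M_{\op{cyc}}$; the hypothesis $\mu(M_{\op{cyc}})=0$ makes this a finitely generated $\Z_p$-module, and topological Nakayama in the element $X$ then shows that $M$ is finitely generated over $\Lambda'$, with $\op{rank}_{\Z_p}(M/XM)=\lambda(M_{\op{cyc}})\le 1$. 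To see that $M$ is $\Lambda'$-torsion-free, let $T\subseteq M$ be its $\Lambda'$-torsion submodule. It is a $\Lambda_\infty$-submodule annihilated by some nonzero $h(X)\in\Lambda'$, and, being finitely generated over $\Lambda'$ with commuting operator $Y$, it is also annihilated by a monic $\chi(Y)\in\Lambda'[Y]$ via the determinant trick. Hence $\op{Ann}_{\Lambda_\infty}(T)\supseteq(h(X),\chi(Y))$ has height $\ge 2$, so $T$ is pseudo-null and therefore $T=0$ by hypothesis.

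Thus $M$ is a torsion-free module of rank $\le 1$ over the factorial regular local ring $\Lambda'$; since a reflexive module of rank $\le 1$ over such a ring is free, the $\Lambda'$-bidual $M^{**}$ is free of the same rank, and $Q:=M^{**}/M$ has finite length. The decisive point is that $Q=0$: reducing $0\to M\to M^{**}\to Q\to 0$ modulo $X$ exhibits $Q[X]=\op{Tor}_1^{\Lambda'}(Q,\Lambda'/X)$ as a submodule of $M/XM$, but $M/XM\cong M_{\op{cyc}}$ is $\Z_p$-torsion-free (as $\mu=0$ and $M_{\op{cyc}}$ has no finite submodule), so $Q[X]=0$; since $X$ acts nilpotently on the finite-length module $Q$, this forces $Q=0$. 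Hence $M\cong M^{**}$ is $\Lambda'$-free of rank $\le 1$. If the rank is $0$ then $M=0$; if it is $1$, then $Y$ acts on $M\cong\Lambda'$ as multiplication by some $g\in\op{End}_{\Lambda'}(\Lambda')=\Lambda'$, so $M\cong\Lambda_\infty/(Y-g(X))$ is cyclic. In either case $M$ satisfies \eqref{hyp:SC}.

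I expect the main obstacle to lie not in this algebraic endgame but in securing the two arithmetic inputs uniformly — above all the $\Z_p$-torsion-freeness of the cyclotomic specialization, that is, that $M_{\op{cyc}}$ has no nonzero finite submodule and that the control map $M/XM\to M_{\op{cyc}}$ is an isomorphism rather than merely a pseudo-isomorphism. That this is essential is shown by the candidate $M\cong(p,X)\subseteq\Lambda'$: it is torsion-free of rank $1$ with no pseudo-null submodule, yet it is not free and hence not a direct sum of cyclics; its cyclotomic specialization acquires a finite summand, which is exactly the pathology the finite-submodule input excludes. Verifying that input is where the hypotheses $E(K)[p]=0$ and the Heegner condition do the real work.
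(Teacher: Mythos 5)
Your algebraic endgame is correct --- I checked the height-$\geq 2$ annihilator argument for pseudo-nullity of the $\Lambda'$-torsion submodule, the $\op{Tor}$ argument showing $Q[X]\hookrightarrow M/XM$ forces $Q=0$, and the identification of the $Y$-action with multiplication by some $g(X)\in\mathfrak{m}_{\Lambda'}$, giving $M\cong \Lambda_\infty/(Y-g(X))$ --- but your route is genuinely different from, and heavier than, the one behind the paper. The paper gives no argument at all here: it simply cites \cite[Proposition 5.1]{GHKL}, and the proof there is the short one: by the control theorem (Proposition \ref{control propn}) one has $M/XM\cong \op{Sel}_{p^\infty}(E/K_{\op{cyc}})^\vee$; the hypotheses $\mu=0$, $\lambda\leq 1$ together with Greenberg's theorem that the cyclotomic dual Selmer group has no nonzero finite $\Lambda_{\op{cyc}}$-submodule (good ordinary reduction, $p$ odd, cotorsion) force $M/XM\cong\Z_p$ or $0$; then $\dim_{\F_p}M/(p,X,Y)M\leq 1$, so topological Nakayama makes $M$ cyclic (or zero) over $\Lambda_\infty$, and torsionness follows since $M$ is finitely generated over $\Lambda'=\Z_p\llbracket X\rrbracket$. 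A cyclic torsion module trivially satisfies \eqref{hyp:SC}. Note that you establish every ingredient of this short argument along the way, so your detour through torsion-freeness, reflexivity and biduals buys nothing.

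The one genuine weakness is your first ``standard input'': that $M=\op{Sel}_{p^\infty}(E/K_\infty)^\vee$ has no nonzero pseudo-null $\Lambda_\infty$-submodule. This is not a formal consequence of residual irreducibility and ordinarity; it is a substantive structural theorem (Greenberg's results over $\Z_p^d$-extensions), whose hypotheses --- $\Lambda_\infty$-torsionness of $M$, vanishing of the relevant $H^2$, finite decomposition of the ramified primes, almost-divisibility of the local conditions --- must be verified in this setting, and nothing in the paper supplies it. Since this input is exactly what your torsion-freeness step rests on, as written your proof has a gap there. Fortunately the gap is removable: discard that step entirely and run Nakayama directly from $M/XM\cong\Z_p$ or $0$ as above (your own $(p,X)$ example correctly identifies the input that \emph{cannot} be discarded, namely the $\Z_p$-freeness of the cyclotomic specialization, i.e.\ Greenberg's no-finite-submodule theorem plus the control isomorphism). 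One further bookkeeping point: the control isomorphism and Greenberg's theorem need $E(K)[p]=0$ and cyclotomic cotorsion; these are standing hypotheses in the ambient discussion (and implicit in the statement's use of $\mu$ and $\lambda$), but you should say so rather than fold them into ``irreducibility''.
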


\begin{proof}
    See \cite[Proposition 5.1]{GHKL} for a proof of this result.
\end{proof}

\begin{proposition}\label{control propn}
   The natural restriction map restriction map then induces an isomorphism
\[
\op{Sel}_{p^\infty}(E/K_{a,b})\simeq \op{Sel}_{p^\infty}(E/K_\infty)^{H_{a,b}}.
\]
\end{proposition}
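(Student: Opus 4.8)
The plan is to run a Mazur-style control argument, comparing the two Selmer groups through the commutative diagram of restriction maps. Write $\Gamma := H_{a,b} = \op{Gal}(K_\infty/K_{a,b})$; since $G_\infty \cong \Z_p^2$ and $\Gamma_{a,b} \cong \Z_p$, we have $\Gamma \cong \Z_p$. Taking $\Gamma$-invariants of the defining sequence of $\op{Sel}_{p^\infty}(E/K_\infty)$ (which is left exact) and placing it beneath the defining sequence of $\op{Sel}_{p^\infty}(E/K_{a,b})$ gives
\[
\begin{CD}
0 @>>> \op{Sel}_{p^\infty}(E/K_{a,b}) @>>> H^1(\Q_\Sigma/K_{a,b}, E[p^\infty]) @>>> \bigoplus_{\ell\in\Sigma} J_\ell(E/K_{a,b}) \\
@. @VV{s}V @VV{g}V @VV{h}V \\
0 @>>> \op{Sel}_{p^\infty}(E/K_\infty)^{\Gamma} @>>> H^1(\Q_\Sigma/K_\infty, E[p^\infty])^{\Gamma} @>>> \Bigl(\bigoplus_{\ell\in\Sigma} J_\ell(E/K_\infty)\Bigr)^{\Gamma}
\end{CD}
\]
with $s$ the map in question. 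By the snake lemma it suffices to prove that $g$ is an isomorphism and that $h$ is injective: these force $\ker s = 0$ and $\coker s \cong \ker h = 0$, whence $s$ is an isomorphism.

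For the global map $g$, the inflation--restriction sequence identifies $\ker g$ with $H^1(\Gamma, E(K_\infty)[p^\infty])$ and $\coker g$ with a subgroup of $H^2(\Gamma, E(K_\infty)[p^\infty])$. The key input is that $E(K_\infty)[p^\infty] = 0$: indeed, $K_\infty/K$ is pro-$p$, so a nonzero $\op{Gal}(\bar{\Q}/K_\infty)$-fixed $p$-subgroup of $E[p^\infty]$ would have nonzero $\op{Gal}(K_\infty/K)$-invariants, contradicting $E(K)[p] = 0$. Hence both cohomology groups vanish and $g$ is an isomorphism.

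The heart of the argument is the injectivity of $h = \bigoplus_{\ell} h_\ell$, which I would verify prime by prime using that every $\ell \in \Sigma$ is split in $K$ and finitely decomposed in $K_\infty$. For $\ell \ne p$ the Kummer sequence gives $J_\ell(E/F) \cong \bigoplus_{v\mid \ell} H^1(F_v, E[p^\infty])$, and inflation--restriction at the decomposition groups expresses $\ker h_\ell$ as a sum of terms $H^1(\op{Gal}((K_\infty)_w/(K_{a,b})_v), E((K_\infty)_w)[p^\infty])$; the bad-reduction contribution is governed by the $p$-part of the local component groups, which is trivial under the hypothesis $p \nmid c_v(E/K)$. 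At $v \mid p$ one uses the good-ordinary filtration $0 \to \widehat{E}[p^\infty] \to E[p^\infty] \to \widetilde{E}[p^\infty] \to 0$ to rewrite the Kummer local condition; total ramification of $\p$ and $\bar{\p}$ in $K_{a,b}$ keeps the residue field equal to $\kappa_v$, so the control error is governed by $\widetilde{E}(\kappa_v)[p^\infty]$, which vanishes by hypothesis. Together these give $\ker h = 0$.

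The main obstacle is this last step: securing \emph{exact} injectivity of the local restriction maps rather than merely finite kernels. This is exactly where the conditions packaged in \eqref{tEC} are indispensable --- $p \nmid c_v(E/K)$ at the bad primes and $\widetilde{E}(\kappa_v)[p^\infty] = 0$ at the primes above $p$ --- since without them the argument yields only Mazur's pseudo-isomorphism, whereas the proposition asserts an honest isomorphism. One must also keep track of the fact, recorded earlier in the section, that over a layer in which $\p$ and $\bar{\p}$ are totally ramified the local conditions defining $\op{Sel}_{p^\infty}(E/\cK)$ coincide with Greenberg's, so that the same local computations apply uniformly.
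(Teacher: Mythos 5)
The paper offers no argument of its own here—it simply cites \cite{GHKL}—so you are supplying a proof the paper outsources. Your overall strategy is the right one and is, in substance, the standard control-theorem argument that the cited result rests on: the snake-lemma diagram, bijectivity of the global restriction map because $E(K_\infty)[p^\infty]=0$ (which correctly uses the standing hypothesis $E(K)[p]=0$ and pro-$p$-ness of $G_\infty$), and injectivity of the local maps forced by the hypotheses packaged in \eqref{tEC}. You are also right about the logical role of these hypotheses: without them one only gets Mazur-style finite kernels, and the bare statement of Proposition \ref{control propn} would be false. The analysis at $\ell\neq p$ is essentially sound for $p\geq 5$, though to make it airtight you should note why the $p$-part of the relevant local component groups cannot grow along the tower: the local extensions involved are unramified of $p$-power degree, so additive reduction contributes nothing when $p\geq 5$, nonsplit multiplicative reduction stays nonsplit (the residue degree is odd), and in the split multiplicative case the group is cyclic of order $\op{ord}_v(j)$, which is unchanged by unramified base change.

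The genuine gap is at the primes above $p$. Your argument there invokes total ramification of $\p$ and $\bar{\p}$ in $K_{a,b}$ (``keeps the residue field equal to $\kappa_v$''), but the proposition carries no such hypothesis, and the paper applies it in Proposition \ref{main long propn} to \emph{every} $(a,b)\neq(0,1)$—including the two $\Z_p$-extensions in which $\p$ (resp. $\bar{\p}$) is unramified. For those, $(K_{a,b})_v$ is (up to a finite piece) the unramified $\Z_p$-extension of $\Q_p$, its residue field is $\bigcup_n \F_{p^{p^n}}$, and your local computation as written does not apply. The gap is fixable, but it needs an extra observation: $\widetilde{E}(\kappa_v)[p^\infty]=0$ says the unit root $\alpha$ of Frobenius satisfies $\alpha\not\equiv 1 \pmod{p}$, and since $\alpha^{p^n}\equiv \alpha \pmod{p}$, one gets $\widetilde{E}(k)[p^\infty]=0$ for \emph{every} $p$-power-degree extension $k$ of $\F_p$; hence the error terms at $p$ vanish over the unramified tower as well. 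A second, smaller defect: even in the totally ramified case, the phrase ``the control error is governed by $\widetilde{E}(\kappa_v)[p^\infty]$'' compresses the hardest part of the theorem. With the Kummer (rather than Greenberg) local condition one must also dispose of the formal-group contribution, i.e. show that terms of the shape $H^1\big(\op{Gal}((K_\infty)_w/(K_{a,b})_v), \widehat{E}(\mathfrak{m}_w)\big)$ do not obstruct injectivity—this is where deeply ramified theory (Coates--Greenberg), or the identification of Kummer and Greenberg conditions recorded earlier in the paper, actually enters. Finally, a slip you inherited from the paper itself: a prime $\ell\in\Sigma$, $\ell\neq p$, split in $K$ is \emph{not} finitely decomposed in $K_\infty$ (its decomposition group has $\Z_p$-rank one); this is harmless for your argument, since the local kernels are computed place by place, but it should not be cited as an input.
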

\begin{proof}
    See \cite[Proposition 3.2]{GHKL}.
\end{proof}
\noindent Denote by $\cL_{a,b}:=\pi_{a,b}\left( L_p(X,Y)\right)$ and set $I_{a,b}:=\pi_{a,b}(I)$. Given $a, b\in \Q_p$, I write $a\sim b$ if $a=ub$ where $u\in \Z_p^\times$. 

\begin{proposition}\label{main long propn}
    Let $E_{/\Q}$ be an elliptic curve with good ordinary reduction at a prime $p\geq 5$ and $K$ be an imaginary quadratic number field. Assume that $E(K)[p]=0$ and $(E,K,p)$ satisfies the conditions \eqref{HH}, \eqref{IMC} and \eqref{tEC}. Then, the following assertions hold.
    \begin{enumerate}
    \item For each $(a,b)\in \mathbb{P}^1(\Z_p)$ such that $(a,b)\neq (0,1)$, the Selmer group $\op{Sel}_{p^\infty}(E/K_{a,b})$ is cotorsion over $\Lambda_{a, b}$ with $\mu_{a,b}=0$ and $\lambda_{a,b}=1$.
    \item Conjecture \ref{Mazur-GNC} holds for $(E,K,p)$. 
    \end{enumerate}
\end{proposition}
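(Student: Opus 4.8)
\smallskip

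The plan is to reduce every one-variable assertion to the structure of the two-variable dual Selmer module $\mathfrak{X} := \op{Sel}_{p^\infty}(E/K_\infty)^\vee$ over $\Lambda_\infty = \Z_p\llbracket X,Y\rrbracket$. First I would apply Theorem \ref{PR and S}: since $(E,K,p)$ satisfies \eqref{tEC}, the cyclotomic line carries $\mu(\op{Sel}_{p^\infty}(E/K_{\op{cyc}})) = 0$ and $\lambda(\op{Sel}_{p^\infty}(E/K_{\op{cyc}})) = 1$. Feeding $\lambda_{\op{cyc}} \le 1$, $\mu_{\op{cyc}} = 0$ into Proposition \ref{prop:SC-ord} shows $\mathfrak{X}$ satisfies \eqref{hyp:SC}, so $\mathfrak{X} \simeq \bigoplus_i \Lambda_\infty/(g_i)$ with each $g_i \neq 0$. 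Dualizing the control isomorphism of Proposition \ref{control propn} identifies the $(a,b)$-specialization with $H_{a,b}$-coinvariants, giving $\op{Sel}_{p^\infty}(E/K_{a,b})^\vee \simeq \bigoplus_i \Lambda_{a,b}/(\pi_{a,b}(g_i))$; the case $(a,b) = (1,0)$ returns the cyclotomic invariants.

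Next I would pin down $\mathfrak{X}$ exactly. Specializing the decomposition to the cyclotomic line (which kills $X$), the constraints $\mu_{\op{cyc}} = 0$, $\lambda_{\op{cyc}} = 1$ force each summand with $g_i(0,Y)$ a unit to be zero — such a $g_i$ lies outside the maximal ideal $(p,X,Y)$, hence is a unit of $\Lambda_\infty$ — leaving a single cyclic summand. Thus $\mathfrak{X} \simeq \Lambda_\infty/(g)$, and by the principality of characteristic ideals of \eqref{hyp:SC}-modules together with \eqref{IMC} I may take $g \sim L_p(X,Y)$, where $g(0,Y)$ is a degree-one distinguished polynomial. The decisive input is the vanishing of the anticyclotomic specialization: under \eqref{HH} the sign of $(E,K)$ is $-1$, which forces $L_p(X,0) = 0$ (the phenomenon underlying the identity $\langle \partial_Y L_p(X,0)\rangle = \langle \langle z_\infty, z_\infty\rangle_{\Lambda_{\op{ac}}}\rangle$). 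Hence $Y \mid g$; writing $g = Yh$, the relation $g(0,Y) = Yh(0,Y)$ together with $g(0,Y)$ being distinguished of degree one forces $h$ to be a unit, so $g \sim Y$ and $\mathfrak{X} \simeq \Lambda_\infty/(Y) \simeq \Lambda_{\op{ac}}$.

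With $g \sim Y$, assertion (1) becomes a computation of $\pi_{a,b}(Y)$. For $(a,b) \ne (0,1)$ one has $\pi_{a,b}(Y) \ne 0$, so $\op{Sel}_{p^\infty}(E/K_{a,b})^\vee \simeq \Lambda_{a,b}/(\pi_{a,b}(Y))$ is $\Lambda_{a,b}$-cotorsion with $\mu_{a,b} = 0$; on the chart $[1:d]$ (cyclotomic included) and on $[c:1]$ with $c \in \Z_p^\times$ the element $\pi_{a,b}(Y)$ is a uniformizer, so $\lambda_{a,b} = 1$. Assertion (2) is then immediate: for $\cK \ne K_{\op{ac}}$ the group is cotorsion, so $\op{corank}_{\Z_p}\op{Sel}_{p^\infty}(E/\cK_n)$ stabilizes and $c = 0$; for $\cK = K_{\op{ac}}$ the module $\Lambda_{\op{ac}}$ is free of rank one, whence $\op{corank}_{\Z_p}\op{Sel}_{p^\infty}(E/K_{\op{ac},n}) = p^n + O(1)$ and $c = 1$, matching the generic, sign $-1$ prediction of Conjecture \ref{Mazur-GNC}.

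The main obstacle is the anticyclotomic vanishing $Y \mid L_p$: proving $L_p(X,0) = 0$ honestly rests on the sign computation under \eqref{HH} and the Heegner-point input behind the height-pairing identity, not on any formal manipulation. A more delicate point, where I expect the argument to need genuine care, is the uniformity of $\lambda_{a,b} = 1$ in (1): for directions approaching the anticyclotomic point, namely $[c:1]$ with $c \in p\Z_p \setminus \{0\}$, one computes $\pi_{c,1}(Y) = (1+S)^{-c} - 1$, whose $\lambda$-invariant is $p^{v_p(c)} > 1$. Thus the clean value $\lambda_{a,b} = 1$ can only be expected away from a $p$-adic neighborhood of $K_{\op{ac}}$, and one must check which ramified, non-anticyclotomic directions relevant to the statement avoid this locus. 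In any case the cotorsionness and $\mu_{a,b} = 0$ — which are all that Conjecture \ref{Mazur-GNC} requires — persist for every $(a,b) \ne (0,1)$.
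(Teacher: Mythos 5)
Your proposal is correct in substance, and it follows the paper's global strategy --- cyclotomic input from \eqref{tEC} via Theorem \ref{PR and S}, the two-variable \eqref{IMC}, control via Proposition \ref{control propn}, and the vanishing of $L_p$ along the anticyclotomic line --- but the execution differs at two points, both to your advantage. First, you pin down the module itself, $\X=\op{Sel}_{p^\infty}(E/K_\infty)^\vee\simeq \Lambda_\infty/(Y)$, using \eqref{hyp:SC} via Proposition \ref{prop:SC-ord}, and then read off every specialization as $\Lambda_{a,b}/(\pi_{a,b}(Y))$; the paper never writes down the structure of $\X$, working instead with Weierstrass expansions of $\cL_{a,b}=\pi_{a,b}(L_p)$ and matching linear coefficients against $\langle z_\infty,z_\infty\rangle_0$. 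Second, you prove assertion (2) directly from this structure (bounded coranks off the anticyclotomic line; $\op{corank}_{\Z_p}\op{Sel}_{p^\infty}(E/K_{\op{ac},n})=p^n+O(1)$ along it, since the dual is $\Lambda_{\op{ac}}$-free of rank one), whereas the paper deduces (2) by citing Theorem \ref{kundu lei theorem}. Your route is preferable here: that theorem carries hypotheses (analytic rank one, non-CM) which the Proposition's hypotheses do not obviously supply, so the self-contained argument closes a genuine loose end.

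The ``delicate point'' you flag at the end is not a defect of your argument; it is a counterexample to part (1) as stated, and it locates the precise error in the paper's own proof. Under the Proposition's hypotheses one is forced to $L_p\sim Y\cdot(\text{unit})$, hence for $(a,b)=(c,1)$ with $c\in p\Z_p\setminus\{0\}$ one gets
\[
\op{Sel}_{p^\infty}(E/K_{c,1})^\vee\simeq \X_{H_{c,1}}\simeq \Z_p\llbracket X\rrbracket/\bigl((1+X)^c-1\bigr),
\]
which is $\Z_p$-free of rank $p^{v_p(c)}$: so $\mu_{c,1}=0$ but $\lambda_{c,1}=p^{v_p(c)}>1$. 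This is arithmetically inevitable, since $K_{c,1}$ coincides with $K_{\op{ac}}$ through the first $v_p(c)$ layers, where the corank has already grown to $p^{v_p(c)}$. The paper's proof asserts ``by the same arguments'' that the linear coefficient of $\cL_{a,b}$ satisfies $\alpha_{a,b}\sim\langle z_\infty,z_\infty\rangle_0$ for \emph{every} $(a,b)\neq(0,1)$; in fact on this chart $\pi_{c,1}(Y)=(1+T)^{-c}-1$, so $\alpha_{c,1}\sim c\,\langle z_\infty,z_\infty\rangle_0$ is not a unit, and the conclusion $I_{a,b}=(T)$ fails exactly there. What survives is what your argument delivers: cotorsionness with $\mu_{a,b}=0$ for all $(a,b)\neq(0,1)$, $\lambda_{a,b}=1$ precisely on the chart $a\in\Z_p^\times$ (more generally $\lambda_{a,b}=p^{v_p(a/b)}$ for primitive $(a,b)$ with $b\in\Z_p^\times$), and both growth statements of Conjecture \ref{Mazur-GNC}. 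Consequently part (2), and Theorem \ref{thm B} (which only uses finiteness of coranks), are unaffected, but part (1) of Proposition \ref{main long propn} and the $\lambda$-invariant and corank claims in Theorem \ref{thmA}(a),(b) should be restricted to directions with $a\in\Z_p^\times$ or corrected as above.
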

\begin{proof}
    Write $\Lambda_{1,0}=\Lambda_{\rm{cyc}}$ as a formal power series ring $\Z_p\llbracket T\rrbracket$. As a consequence of Proposition \ref{control propn}, one finds that $I_{a,b}$ is the characteristic ideal of of $\op{Sel}_{p^\infty}(E/K_{a,b})^\vee$. The hypothesis \eqref{IMC} implies that $(\cL_{a,b})=I_{a,b}$. On the other hand, Theorem \ref{PR and S} in conjunction with hypothesis \eqref{tEC} implies that the $\mu$-invariant (resp. $\lambda$-invariant) of $I_{1,0}$ is $0$ (resp. $1$). As a consequence, $I_{1,0}=(T)$. On the other hand, 
    \[\cL_{1,0}(T)= \alpha T+\cdots \text{higher order terms},\] where $\alpha\sim \langle z_\infty, z_\infty\rangle_0$, see for instance \cite[proof of Theorem 3.1]{kundulei2025}. Since $\cL_{1,0}$ divides $(T)$, I deduce that $\langle z_\infty, z_\infty\rangle_0\in \Z_p^\times$. Now let $(a,b)\in \mathbb{P}^1(\Z_p)$ be such that $(a,b)\neq (0,1)$. Writing $\Lambda_{a,b}=\Z_p\llbracket T_{a,b}\rrbracket$. By the same arguments in \emph{loc. cit.} I may express $\cL_{a,b}(T_{a,b})$ as 
    \[\cL_{a,b}(T_{a,b})=\alpha_{a,b} T+\dots +\text{higher order terms},\] where $\alpha_{a,b}\sim \langle z_\infty, z_\infty\rangle_0$. Thus, I find that $\alpha_{a,b}\in \Z_p^\times$ and this implies that $I_{a,b}=(T)$. I thus deduce that $\mu_{a,b}=0$ and $\lambda_{a,b}=1$ and this completes the proof of (1). 
    \par Since $\langle z_\infty, z_\infty\rangle_\infty\neq 0$, it follows from Theorem \ref{kundu lei theorem} that Conjecture \ref{Mazur-GNC} holds for $(E,K,p)$.
\end{proof}

\begin{theorem}\label{main theorem of paper}
Let $E$ be an elliptic curve over $\Q$ with good ordinary reduction at a prime $p\geq 5$ and $K$ be an imaginary quadratic number field. Assume that $E_{/K}$ satisfies the conditions \eqref{HH}, \eqref{IMC} and \eqref{tEC}. Let $f$ be a $p$-ordinary Hecke eigencuspform with $p\nmid N_f$. Assume that $f$ satisfies \eqref{HH} and $A_f[\varpi]\simeq E[p]\otimes_{\F_p}\kappa$ as modules over $\op{G}_\Q$. Then, for each $(a,b)\neq (0,1)$ such that $\p$ and $\bar{\p}$ are totally ramified in $K_{a,b}$, the $\Z_p$-corank of $\op{Sel}_{\op{Gr}}(A_f/K_{a,b})$ is finite. In particular, if $A_f=E'[p^\infty]$ for some elliptic curve $E'$ over $\Q$, the rank of $E'(K_{a,b})$ is finite.
\end{theorem}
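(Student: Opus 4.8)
The plan is to deduce the statement by chaining the two structural results already established, then converting the resulting $\Lambda$-module property into the stated finiteness of coranks. Proposition \ref{main long propn} controls the Selmer group of $E$ over every $\Z_p$-extension except the anticyclotomic one, Proposition \ref{congruence propn} propagates the relevant property across the mod-$p$ congruence, and Lemma \ref{basic lemma on lambda and corank} translates ``cotorsion with $\mu=0$'' into finiteness of the $\Z_p$-corank.

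Concretely, I would fix $(a,b)\in\mathbb{P}^1(\Z_p)$ with $(a,b)\neq(0,1)$ for which $\mathfrak{p}$ and $\bar{\mathfrak{p}}$ are totally ramified in $K_{a,b}$, and identify $\Lambda_{a,b}=\Z_p\llbracket T\rrbracket$. Since $(E,K,p)$ satisfies \eqref{HH}, \eqref{IMC} and \eqref{tEC}, Proposition \ref{main long propn}(1) gives that $\op{Sel}_{p^\infty}(E/K_{a,b})$ is cotorsion over $\Lambda_{a,b}$ with $\mu_{a,b}=0$ (the value $\lambda_{a,b}=1$ is not needed here). To transfer this to $f$, I would verify the hypotheses of Proposition \ref{congruence propn} with $\cK=K_{a,b}$: the congruence $A_f[\varpi]\simeq E[p]\otimes_{\F_p}\kappa$ together with the ordinarity and splitting conditions holds by assumption, both $E$ and $f$ satisfy \eqref{HH}, and $E(K)[p]=0$ yields $H^0(K, A_f[\varpi])=H^0(K, E[p])\otimes_{\F_p}\kappa=0$. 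Proposition \ref{congruence propn} then shows that $\op{Sel}_{\op{Gr}}(A_f/K_{a,b})$ is itself cotorsion over $\Z_p\llbracket T\rrbracket$ with $\mu=0$. By Lemma \ref{basic lemma on lambda and corank}, such a module is cofinitely generated as a $\Z_p$-module, so its $\Z_p$-corank is finite, which is the main assertion.

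For the final sentence, when $A_f=E'[p^\infty]$ the Greenberg Selmer group coincides with $\op{Sel}_{p^\infty}(E'/K_{a,b})$, since the totally ramified primes above $p$ make the two sets of local conditions agree; the Kummer map then embeds $E'(K_{a,b})\otimes_\Z\Q_p/\Z_p$ into $\op{Sel}_{p^\infty}(E'/K_{a,b})$, so finiteness of the $\Z_p$-corank forces the Mordell--Weil ranks of $E'$ over the finite layers of $K_{a,b}$ to be bounded. Since every analytic and cohomological input is already contained in the quoted propositions, the argument is essentially a matter of verifying that the hypotheses chain correctly; the one place I would take care is confirming, via the Galois-module isomorphism, that $E(K)[p]=0$ really does force $H^0(K, A_f[\varpi])=0$, as this is precisely what licenses the application of Proposition \ref{congruence propn}. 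I do not anticipate a substantive obstacle beyond this bookkeeping.
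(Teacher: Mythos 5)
Your proposal is correct and takes essentially the same route as the paper's proof: apply Proposition \ref{main long propn} to get that $\op{Sel}_{p^\infty}(E/K_{a,b})$ is cotorsion with $\mu_{a,b}=0$, transfer this to $f$ via Proposition \ref{congruence propn}, and conclude finiteness of the $\Z_p$-corank. Your write-up is in fact more complete than the paper's terse proof, since you explicitly verify the hypothesis $H^0(K,A_f[\varpi])=0$ from $E(K)[p]=0$ and the congruence, invoke Lemma \ref{basic lemma on lambda and corank} to pass from ``cotorsion with $\mu=0$'' to finite corank, and justify the final assertion about $E'$ via the identification of the Greenberg Selmer group with $\op{Sel}_{p^\infty}(E'/K_{a,b})$ when $\p$ and $\bar{\p}$ are totally ramified.
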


\begin{proof}
    For any $(a,b)\in \mathbb{P}^1(\Z_p)$ such that $(a,b)\neq (0, 1)$, Proposition \ref{main long propn} implies that $\op{Sel}_{p^\infty}(E/K_{a,b})$ is cotorsion over $\Lambda_{a, b}$ with $\mu_{a,b}=0$ and $\lambda_{a,b}=1$. It then follows from Proposition \ref{congruence propn} that $\op{Sel}_{p^\infty} (A_f/K_{a,b})$ is cotorsion over $\Lambda_{a,b}$, and consequently the $\Z_p$-corank of $\op{Sel}_{\op{Gr}}(A_f/K_{a,b})$ is finite.
\end{proof}
\begin{example}\label{example} I illustrate Theorem \ref{main theorem of paper} through an explicit example. Consider $p=5$ and the elliptic curves defined by their Cremona labels: $E:=37a1$ and $E':=1406g1$. Then, $E$ and $E'$ both have good ordinary reduction at $5$ and as $\op{G}_{\Q}$-modules, one has an isomorphism of the residual representations $E[5]\simeq E'[5]$. The elliptic curves $E_{/K}$ and $E'_{/K}$ satisfy \eqref{HH}. Note that $5$ splits in $K$ and computations on \emph{SAGE} show that $(E, K, p=5)$ satisfies \eqref{tEC}. Since $37\not\equiv -1\pmod{5}$, the set of vexing primes $\mathcal{V}$ for $E$ is empty. The residual representation $\rho_E$ is surjective and hence by Theorem \ref{BCS-IMC}, the \eqref{IMC} is satisfied for $(E, K, p=5)$. Thus Theorem \ref{main theorem of paper} applies to show that $E'(K_{a,b})$ has finite rank for any $\Z_p$-extension $K_{a,b}/K$ in which $\p$ and $\bar{\p}$ are totally ramified.
\end{example}
\subsection{An application involving Hida families}\label{hida subsection}
\par Let $(E, K, p)$ be a triple satisfying the hypotheses of Theorem~\ref{main theorem of paper}. Let $N$ denote the conductor of $E$, and let $\bar{\rho} : \op{Gal}(\overline{\Q}/\Q) \to \op{GL}_2(\F_p)$ be the residual Galois representation arising from the action on $E[p]$. Assume that $\bar{\rho}$ is absolutely irreducible, and let $f_E$ denote the weight two cuspidal eigenform of trivial nebentypus associated with $E$.

\par Let $\bT_N$ be the universal reduced ordinary Hecke algebra of tame level $N$, $p$-power level, and arbitrary weight. Let $\Gamma' \subset \Z_p^\times$ denote the group of principal units, and define $\Lambda' := \Z_p\llbracket \Gamma' \rrbracket$, the corresponding Iwasawa algebra. After fixing a topological generator of $\Gamma'$, I identify $\Lambda'$ with the formal power series ring $\Z_p\llbracket X \rrbracket$. Let $\bT_N^{\mathrm{new}}$ denote the new quotient of $\bT_N$, and let $\mathfrak{m} \subset \bT_N$ be the maximal ideal corresponding to $\bar{\rho}$. Set $\mathcal{T} := \bT_{N,\mathfrak{m}}$ to be the localization of $\bT_N$ at $\mathfrak{m}$. A minimal prime ideal $\mathfrak{a} \subset \mathcal{T}$ is called a \emph{branch}, and I write $\mathcal{T}(\mathfrak{a}) := \mathcal{T}/\mathfrak{a}$. The scheme $\op{Spec} \mathcal{T}(\mathfrak{a})$ is referred to as a \emph{Hida family}. I refer to $\op{Spec} \Lambda'$ as the \emph{weight space} and to $\pi_{\mathfrak{a}} : \op{Spec} \mathcal{T}(\mathfrak{a}) \to \op{Spec} \Lambda'$ as the \emph{map to weight space}. This map is finite and surjective. The Hida family $\op{Spec} \mathcal{T}(\mathfrak{a})$ is equipped with a Galois representation
\[\rho_{\mathfrak{a}} : \op{G}_\Q \to \op{GL}_2(\mathcal{T}(\mathfrak{a}))\]
\noindent lifting $\bar{\rho}$.
\par Now let $\op{Spec} \mathcal{T}(\mathfrak{a})$ be one of the branches which contains the point corresponding to $f_E$. Then for each integer $k > 0$ with $k \equiv 2 \pmod{p-1}$, there exists a classical modular form $f_k$ of weight $k$ lying on $\op{Spec} \mathcal{T}(\mathfrak{a})$, congruent modulo $p$ to $f_E$. The conclusion of Theorem~\ref{main theorem of paper} then implies that for each pair $(a, b) \ne (0,1)$ such that the primes $\p$ and $\bar{\p}$ are totally ramified in the extension $K_{a,b}$, the $\Z_p$-corank of the Greenberg Selmer group $\op{Sel}_{\op{Gr}}(A_{f_k}/K_{a,b})$ is finite.

\bibliographystyle{alpha}
\bibliography{references}
\end{document}